\theoremstyle{plain}
\newtheorem{theorem}{Theorem}[section]
\newtheorem{lemma}[theorem]{Lemma}
\theoremstyle{definition}
\theoremstyle{remark}
\newtheorem{remark}[theorem]{Remark}
\newtheorem*{remark3}{Acknowledgements}
\newcommand{\capacity}{\operatorname{cap}}
\newcommand{\diam}{\operatorname{diam}}
\DeclareMathOperator*{\esssup}{ess\,sup}
\DeclareMathOperator*{\essinf}{ess\,inf}
\def\C{\mathbb C}
\def\R{\mathbb R}
\def\Z{\mathbb Z}
\def\N{\mathbb N}
\def\H{\mathbb H}
\def\cA{\mathcal A}
\def\cI{\mathcal I}
\def\cJ{\mathcal J}
\def\cE{\mathcal E}
\def\cU{\mathcal U}
\def\cY{\mathcal Y}
\def\det{\operatorname{det}}
\def\dens{\operatorname{dens}}
\def\dist{\operatorname{dist}}
\def\length{\operatorname{length}}
\def\meas{\operatorname{meas}}
\def\card{\operatorname{card}}
\def\ann{\operatorname{ann}}
\def\id{\operatorname{id}}
\numberwithin{equation}{section}
\title{Hausdorff dimension in quasiregular dynamics}
\author{Walter Bergweiler}
\address{Mathematisches Seminar, Christian--Albrechts--Uni\-versi\-t\"at zu Kiel,
Heinrich--Hecht--Platz 6,
24098 Kiel, Germany}
\email{bergweiler@math.uni-kiel.de}
\author{Athanasios Tsantaris}
\address{Department of Mathematics and Statistics, University of Helsinki, Pietari Kalmin katu 5, 00560 Helsinki, Finland}
\email{athanasios.tsantaris@helsinki.fi}
\subjclass[2020]{Primary 37F35; Secondary 30C65, 30D05,37F10}
\date{}
\begin{document}
\begin{abstract}
It is shown that the Hausdorff dimension of the fast escaping set of a 
quasiregular self-map of ${\mathbb R}^3$ can take any value in the interval $[1,3]$.
The Hausdorff dimension of the Julia set of such a map is estimated under some
growth condition.
\end{abstract}
\maketitle
\section{Introduction} \label{intro}
Quasiregular maps are a natural generalization of holomorphic maps to higher dimensions;
see \S~\ref{qr_maps} for their definition and basic properties.
We will be concerned with quasiregular self-maps of $\R^d$. These are the
analogues of entire maps $f\colon\C\to\C$. A  quasiregular map
$f\colon\R^d\to\R^d$  is said to be of \emph{polynomial type} if
$\lim_{|x|\to\infty}|f(x)|=\infty$ and of \emph{transcendental type} otherwise.
Here $|x|$ is the Euclidean norm of a point $x\in\R^d$.

The \emph{escaping set}
$\cI(f)$ of a quasiregular map $f\colon\R^d\to\R^d$ is defined by
\begin{equation} \label{0a}
\cI(f) = \{ x \in \R^d \colon |f^n(x)| \to \infty \}.
\end{equation}
Here $f^n$ is the $n$-th iterate of $f$. For transcendental entire functions 
the escaping set was introduced by Eremenko~\cite{Eremenko1989}
who, in particular, showed that it is always non-empty.
This result was extended to quasiregular maps of transcendental type in~\cite{BFLM}.

The proofs in~\cite{BFLM,Eremenko1989} actually show that not only the escaping
set $\cI(f)$ is non-empty, but that this in fact holds for a subset $\cA(f)$ of $\cI(f)$
called the \emph{fast escaping set}.
This set was first considered in~\cite{BH}.

In order to define it, recall that the \emph{maximum modulus} is given by
\begin{equation} \label{0b}
M(r,f)=\max_{|x|=r}|f(x)|.
\end{equation}
Let $M^n(r,f)$ denote the $n$-th iterate of $M(r,f)$
with respect to the first variable. We thus have 
$M^1(r,f) = M(r,f)$ and $M^n(r,f) = M(M^{n-1}(r,f),f)$ for $n\geq 2$.
For a quasiregular map $f$ of transcendental type
there exists $R_0\geq 0$ such that $M(R,f)>R$ for $R>R_0$.
For $R>R_0$ we then have $M^n(R,f)\to\infty$ as $n\to\infty$.
For such $R$ the fast escaping set is defined by 
\begin{equation} \label{0c}
\begin{aligned}
\cA(f) 
=  \{ x \in \R^d \colon & \text{there exists}\ L \in \N\ \text{such that}\ 
|f^{n+L}(x)| \geq M^n(R,f)\ \\ & \text{for all}\ n\in\N\}.
\end{aligned}
\end{equation}
This definition does not depend on $R$ as long as
$M^n(R,f)\to\infty$; see \cite[Theorem 2.2,~(b)]{Rippon2012} for transcendental entire functions
and~\cite[Proposition 3.1,~(i)]{BDF} for quasiregular maps. These papers also contain some
equivalent definitions of $\cA(f)$.

The Julia set $\cJ(f)$ of a transcendental entire function $f$ is defined as the set of 
all points where the iterates of $f$ do not form a normal family. 
Eremenko~\cite{Eremenko1989} showed that $\cJ(f)=\partial \cI(f)$.
As noted by Rippon and Stallard~\cite[p.~1125, Remark~1]{Rippon2005}, we also have
$\cJ(f)=\partial \cA(f)$.

For quasiregular mappings the set where the iterates are not normal does not
have the properties one would normally associate with a Julia set. 
Therefore, instead of using normality,
the Julia set $\cJ(f)$ was defined in~\cite{Bergweiler2013,BN} as the set of 
all $x\in\R^d$ such that $\R^d\setminus\bigcup_{k=1}^\infty f^k(U)$ has capacity
zero for every neighborhood $U$ of~$x$; see \S~\ref{qr_maps} for the definition of capacity.
As shown in these papers, the set $\cJ(f)$ defined this way shares many properties with Julia sets of entire
functions. However, there are examples~\cite[Example~7.3]{BN} where $\cJ(f)\neq \partial \cI(f)$.
On the other hand, we always have $\cJ(f)\subset \partial \cI(f)$; see~\cite[Theorem~1.3]{BN}.

By~\cite[Theorem~1.1]{BFN} we also have $\cJ(f)\subset\partial \cA(f)$.
Moreover, it was shown in~\cite[Theorem~1.2]{BFN} that under the additional hypothesis 
\begin{equation}\label{0d}
\liminf_{r\to\infty}\frac{\log\log M(r,f)}{\log \log r}=\infty
\end{equation}
we even have $\cJ(f)=\partial \cA(f)$.
Thus for quasiregular maps the fast escaping set is perhaps even more relevant
for the dynamics than the escaping set.

There is a large literature concerned with the Hausdorff dimension of the above sets
for transcendental entire functions; see~\cite{Kotus2008} and~\cite{Stallard2008}
for surveys.
We will describe some of these results, denoting by
$\dim X$ the Hausdorff dimension of a subset $X$ of $\R^d$.

Baker~\cite{Baker1975} proved that the Julia set of 
a transcendental entire function $f$ contains continua.  Thus $\dim \cJ(f)\geq 1$.
On the other hand, for all $\rho\in[1,2]$ there exists a transcendental entire
function $f$ satisfying $\dim \cJ(f)=\rho$. Here the case $\rho\in (1,2)$ is due to 
Stallard~\cite[Theorem~1.1]{Stallard2000} while the case $\rho=1$ is due to 
Bishop~\cite{Bishop2018}. Examples with $\dim \cJ(f)=2$ had been known already before.
Perhaps the simplest ones are those with $\cJ(f)=\C$.
The first example of such a function $f$ was given by Baker~\cite{Baker1970}.

Rippon and Stallard~\cite[Theorem~1]{Rippon2005} showed that every component of 
$\cA(f)$ is unbounded. While a conjecture of Eremenko~\cite{Eremenko1989}
saying that every component of $\cI(f)$ is unbounded was recently 
disproved~\cite{Marti-Pete2022}, the result by Rippon and Stallard
implies of course that $\cI(f)$ has at 
least one unbounded component. In particular,
$\dim \cI(f)\geq \dim \cA(f)\geq 1$.
On the other hand, it was shown in~\cite[Corollary~1.2]{Rempe2010} that for all 
$\rho\in[1,2]$ there exists a transcendental entire function $f$ such that 
$\dim \cI(f)=\rho$. 
The main contribution of~\cite{Rempe2010} is the case $\rho=1$. The case
$\rho\in(1,2)$ is deduced from results in~\cite{Stallard2000} and~\cite{BKS}.
Inspection of the proofs in~\cite{BKS} shows that we actually may achieve
$\dim \cA(f)=\rho$ for given $\rho\in(1,2)$. 
For $\rho=2$ such an example is given by the exponential function;
this follows from the arguments in~\cite[Theorem~1.2]{McMullen1987}.
Further examples are given in~\cite{Sixsmith2015}.
Altogether we see that for all $\rho\in[1,2]$ there exists a transcendental
entire function $f$ such that $\dim \cA(f)=\rho$.

It is the main purpose of this paper to extend this result
to quasiregular maps of transcendental type.
We will restrict to the case that $d=3$. The reason is that this allows to use a
construction of Nicks and Sixsmith~\cite{Nicks2018}.

\begin{theorem}\label{theorem_dimA}
For all $\rho\in [1,3]$ there exists a quasiregular map 
$f\colon \mathbb{R}^3\to \mathbb{R}^3$ of transcendental type
such that $\dim \mathcal{A}(f)=\rho$.
\end{theorem}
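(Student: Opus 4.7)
The plan is to transplant the planar examples cited in the introduction into $\R^3$ via the construction of Nicks and Sixsmith~\cite{Nicks2018}. That construction associates, to a transcendental entire function $g\colon\C\to\C$ of a suitable class, a quasiregular map $F\colon\R^3\to\R^3$ of transcendental type whose dynamics can be read off those of $g$. The essential expected feature is that $F$ has an approximate product structure with respect to a decomposition $\R^3=\R^2\times\R$, so that $\cA(F)$ contains a set bi-Lipschitz equivalent to $\cA(g)\times E$ for some set $E$ in the third coordinate. Combined with the product formula for Hausdorff dimension, this ties $\dim\cA(F)$ to $\dim\cA(g)$ and to $\dim E$, which then serve as the two independent parameters to interpolate within $[1,3]$.

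For $\rho\in[2,3]$ I would choose $g$ with $\dim\cA(g)=\rho-1\in[1,2]$. Such $g$ exists by the results quoted in the introduction: Bishop~\cite{Bishop2018} at $\rho-1=1$, variants of~\cite{Stallard2000,BKS} (as exploited in~\cite{Rempe2010}) in the open interval, and McMullen~\cite{McMullen1987} or~\cite{Sixsmith2015} at $\rho-1=2$. Taking $E$ to be a full interval and applying the Nicks--Sixsmith construction should then give $\dim\cA(F)=(\rho-1)+1=\rho$. For $\rho\in[1,2)$, I would instead pick $g$ with $\dim\cA(g)=\rho$ and arrange the construction so that fast escape in $F$ forces the third coordinate into a fibre of dimension zero, so that $\dim\cA(F)=\dim\cA(g)=\rho$. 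The endpoint $\rho=1$ is the most delicate because it inherits Bishop's quasiconformal folding through its planar input.

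The lower bound $\dim\cA(F)\geq\rho$ should be reasonably direct once a copy of $\cA(g)$ (respectively $\cA(g)\times E$) is embedded in $\cA(F)$ via the Nicks--Sixsmith lift. The main technical obstacle is the matching upper bound $\dim\cA(F)\leq\rho$: one has to cover $\cA(F)$ by iterated preimages of suitable level sets, summing diameters to the power $\rho$, uniformly in the quasiconformal distortion of $F$. This would be modelled on the planar covering arguments of~\cite{Stallard2000,BKS}, but carried out in three dimensions using quasiregular distortion estimates and the capacity machinery of~\cite{Bergweiler2013,BN,BFN}, invoking a growth condition of the type~\eqref{0d} when needed to pass from $g$ to $F$. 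Ensuring this covering argument remains sharp at the endpoint $\rho=1$, where the planar input is Bishop's elaborate folding construction, is expected to be the hardest step.
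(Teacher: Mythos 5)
The proposal misreads what the Nicks--Sixsmith construction~\cite{Nicks2018} actually provides. Their paper does not build a functorial lift taking a planar entire function $g$ to a quasiregular $F\colon\R^3\to\R^3$ with a product structure $\cA(F)\approx\cA(g)\times E$; it constructs two specific maps: the modified (pyramid-faced) Zorich map $Z$, a three-dimensional analogue of $\exp$, and a quasiregular map $G$ with $G(x)=x+Z(x)$ on a half-space $\H_{\geq L}$ and $G=\id$ on $\H_{\leq 0}$ (a $3$D analogue of $z\mapsto z+e^z$). No general transplant of planar examples into $\R^3$ exists, and in particular Bishop's quasiconformal folding~\cite{Bishop2018} has no known quasiregular counterpart in dimension three, so the plan for the endpoint $\rho=1$ (and, similarly, the ``dimension-zero fibre'' plan for $\rho\in[1,2)$) has no available input. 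Also, fast escape under a Zorich-type map is a genuinely three-dimensional phenomenon driven by $e^{x_3}h(x_1,x_2)$; there is no mechanism that confines fast-escaping orbits to a planar slice so that a product formula for Hausdorff dimension could be applied.

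What the paper actually does, and what your plan is missing, is a direct surgery in $\R^3$ governed by an explicitly constructed sparse subset $S\subset\Z^2$. Set $H=Z\circ\varphi$ with a shear $\varphi$ so that $H$ is bounded outside a wedge, define $f=G\circ H$ on the beams $T(s)$ with $s\in S$ and $f=H$ elsewhere; this is consistent since $G\circ H=H$ on $\partial T(s)$. The single parameter controlling the dimension is the sparsity exponent $\beta=(3-\rho)/2$ entering Lemma~\ref{lemma11}, which forces $\card(S\cap B(x,r))\asymp |x|^{-2\beta}r^2$ (Lemma~\ref{lemma12}). The upper bound $\dim\cA(f)\leq\rho$ then comes from a covering argument (Lemma~\ref{lemma10}) counting how many beams can meet an image ball, and the lower bound from a nested-set/mass-distribution construction (Lemma~\ref{lemma-frostman}, Lemmas~\ref{lemma2}--\ref{lemma6}) modelled on McMullen and~\cite{BKS}. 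Crucially, $\rho=1$ is handled by taking $S=\{(0,0)\}$: a single beam, no folding. The case $\rho=3$ is already known for a Zorich-type map, so only $\rho\in[1,3)$ needs the new construction. If you want to salvage your outline, replace the ``lift a planar $g$'' step by this explicit beam-selection mechanism; the covering and mass-distribution machinery you sketch for the upper and lower bounds is then the right shape of argument, but it must be run directly on $f$, not on a hypothetical product.
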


As already mentioned, we have $\dim \cI(f)\geq 1$ for a quasiregular map
$f\colon \R^d\to\R^d$ of transcendental type.
We do not know whether this bound is sharp. It seems plausible that we have 
$\dim \cI(f)\geq d-1$ for such maps; see Remark~\ref{dimI} below.

Perhaps we also have $\dim \cJ(f)\geq d-1$.
However, no positive lower bound is known for $\dim \cJ(f)$.
It was shown in~\cite[Theorem~1.7]{BN} that $\dim \cJ(f)>0$ if $f$ is locally Lipschitz continuous. 
We obtain a lower bound under the growth condition~\eqref{0d}.

\begin{theorem}\label{theorem_dimJ}
Let $f\colon\mathbb{R}^d\to\mathbb{R}^d$ be a quasiregular map satisfying \eqref{0d}. 
Then $\dim \mathcal{J}(f)\geq 1$.
\end{theorem}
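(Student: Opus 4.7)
The plan is to exploit the identification $\cJ(f)=\partial\cA(f)$ provided by \cite[Theorem~1.2]{BFN} and reduce the lower dimension bound to a topological separation argument. Specifically, I will produce a non-empty bounded open set $V\subset\R^d\setminus\overline{\cA(f)}$. Every point $p\in\partial V$ then lies in $\overline{\cA(f)}$ (being outside the open set $\R^d\setminus\overline{\cA(f)}$) while being a limit of points of $V\subset\cA(f)^c$, so $p\in\overline{\cA(f)}\cap\overline{\cA(f)^c}=\partial\cA(f)=\cJ(f)$. Since $V$ is a non-empty bounded open subset of $\R^d$ with $d\geq 2$, the compact set $\partial V$ separates $\R^d$. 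The classical theorem of Hurewicz and Wallman that a closed separating subset of $\R^d$ has topological dimension at least $d-1$, combined with Szpilrajn's inequality $\dim_T\leq\dim_H$, then yields $\dim\cJ(f)\geq\dim\partial V\geq d-1\geq 1$.

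To produce $V$, I would first establish a \emph{spider's web} property for $\cA(f)$ under \eqref{0d}: an exhaustion $\R^d=\bigcup_n G_n$ by bounded connected open sets with $G_n\subset G_{n+1}$ and $\partial G_n\subset\cA(f)$. A natural choice is to take $G_n$ to be a suitable bounded component of $\{x\in\R^d:|f^n(x)|<M^n(R,f)\}$ for a fixed $R>R_0$. On $\partial G_n$ one has $|f^n(x)|=M^n(R,f)$, and a quasiregular distortion estimate of the type used in \cite{BFN} should propagate this to $|f^{n+k}(x)|\geq M^{n+k}(R,f)$ for all $k\geq 0$, putting $\partial G_n$ in $\cA(f)$. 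The growth condition \eqref{0d} is exactly what makes these sublevel sets exhaust $\R^d$.

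Granting the spider's web, the rest is routine. Since $f$ is of transcendental type (as forced by \eqref{0d}), $\cJ(f)\neq\emptyset$ by \cite{BN}, whence $\cA(f)\neq\R^d$; we may assume $\R^d\setminus\overline{\cA(f)}\neq\emptyset$, since otherwise $\cA(f)$ is dense and one applies the same separation argument to a bounded component of $\mathrm{int}(\cA(f))$, whose boundary again lies in $\cJ(f)$ and separates $\R^d$. Pick $x_0\in\R^d\setminus\overline{\cA(f)}$ and choose $n$ so large that $x_0\in G_n$. Let $V$ be the connected component of $x_0$ in the open set $\R^d\setminus\overline{\cA(f)}$. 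Then $V$ is connected and disjoint from $\partial G_n\subset\cA(f)$, and $G_n$ is itself a connected component of $\R^d\setminus\partial G_n$, so $V\subset G_n$ and hence $V$ is bounded, as required.

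The main obstacle will be proving the spider's web property for quasiregular $f$ under \eqref{0d}. For transcendental entire functions this is a theorem of Rippon and Stallard whose proof uses complex-analytic tools that do not directly translate. In the quasiregular setting the argument of \cite{BFN} for $\cJ(f)=\partial\cA(f)$ already involves nested surrounding sets of a similar flavour, so the needed ingredients should be available; the task is to extract from that machinery an explicit bounded exhausting family $\{G_n\}$ and to control the branched covering behaviour of iterates via quasiregular distortion estimates, which the strong growth \eqref{0d} keeps well-behaved.
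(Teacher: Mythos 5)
Your approach is fundamentally different from the paper's, and unfortunately it has a gap that is not patchable within the framework you propose. The paper argues by contradiction: if $\dim\mathcal{J}(f)<1$ then $\mathcal{J}(f)$ is totally disconnected, so the complement has a single unbounded component $U\subset\cA(f)$; it then applies a quasiregular modulus estimate (Lemma~\ref{lemma5.1}, extracted from the argument in \cite{BFN}) along an arc in $U$ joining a point $x_1$ to $f^L(x_1)$, obtaining $\log\frac{\log|f^{k+L}(x_1)|}{\log|f^k(x_1)|}\leq C(K_IK_O)^{k/(d-1)}$, which contradicts the doubly-exponential growth of $|f^k(x_1)|$ forced by \eqref{0d}. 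No separation topology is involved.

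The central problem with your plan is the claim that \eqref{0d} yields a spider's web structure for $\cA(f)$, i.e.\ an exhaustion of $\R^d$ by bounded open sets $G_n$ with $\partial G_n\subset\cA(f)$. This is false already for transcendental entire functions: $f(z)=e^z$ and $f(z)=e^{e^z}$ both satisfy \eqref{0d} (for $e^z$, $\log\log M(r,f)/\log\log r=\log r/\log\log r\to\infty$), yet their fast escaping sets are Cantor bouquets, not spiders' webs; the complement of $\cA(f)$ has no bounded components. In the Rippon--Stallard theory the spider's web property for $A_R(f)$ is a genuinely special phenomenon (e.g.\ it holds when $f$ has a multiply connected Fatou component, or under rather restrictive regularity/order conditions), and it is well known to fail for exponential-type maps. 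Without a bounded open set $V$ with $\partial V\subset\mathcal{J}(f)$, the Hurewicz--Wallman separation argument has nothing to bite on. Relatedly, your route would prove $\dim\mathcal{J}(f)\geq d-1$, which the paper explicitly flags as an open problem; obtaining a strictly stronger conclusion than the theorem being proved, via an argument of this generality, should itself have been a warning sign. The fallback case you sketch (``$\cA(f)$ dense, use a bounded component of $\operatorname{int}\cA(f)$'') is also not justified: nothing guarantees that $\operatorname{int}\cA(f)$ is non-empty, let alone that it has a bounded component.
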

The structure of the rest of this paper is as follows: In Sections \ref{prelim} and \ref{proof1} we recall some definitions and some preliminaries. In Section  \ref{lower_bound} we prove Theorem \ref{theorem_dimA} and finally in Section \ref{proof_theorem_dimJ} we prove Theorem \ref{theorem_dimJ}.

We conclude the introduction with a remark on quasiregular self-maps of~$\overline{\R^d}$.
\begin{remark}
Fletcher and Vellis \cite{Fletcher2021} studied the more general question of when can a Cantor set be the
Julia set of a uniformly quasiregular map. It follows
from \cite[Theorem 1.2]{Fletcher2021} that for any $\rho\in(0,d-1)$ there is a uniformly quasiregular
map $f\colon\overline{\R^d}\to\overline{\R^d}$ such that $\dim \mathcal{J}(f)=\rho$. 
	 
Moreover, by \cite[Corollary 1.4]{Fletcher2021} we can find a uniformly quasiregular map
$f\colon\overline{\R^3}\to\overline{\R^3}$ whose Julia set is a Cantor set which is
defined as the limit set of an iterated function system of similarities acting on $[0,1]^3$.
By choosing the right similarities we can make this set have any Hausdorff dimension in $(0,3)$.

\end{remark}

\begin{remark3}
Part of this work was done while the second author was visiting the University of Kiel which he would like to thank for its hospitality. He would like to thank the London Mathematical Society and the University of Kiel for funding this visit. Also, the second author would like to thank Dan Nicks for many interesting discussions. 
\end{remark3}

\section{Basic results about quasiregular maps and Hausdorff dimension} \label{prelim}
\subsection{Quasiregular maps}  \label{qr_maps} 
We only sketch the definition and basic properties of quasiregular maps.
For a detailed treatment we refer to Rickman's book~\cite{Rickman1993}.

Let $\Omega\subset\R^d$ be a domain and let $f\colon \Omega \to \R^{d}$ be a map.
If $f$ is differentiable at a point $x\in\Omega$, let $Df(x)$ be the derivative,
$|Df(x)|=\sup_{|h|=1}|Df(x)(h)|$ its norm and $J_f(x)=\det J_f(x)$ the
 Jacobian determinant.
We will also use the notation $\ell(Df(x))=\inf_{|h|=1}|Df(x)(h)|$.
The map  $f$ is called \emph{quasiregular} if it is continuous, belongs to the Sobolev
space $W^{1}_{d, loc}(\Omega)$, and there exists $K\geq 1$ such that 
\begin{equation} \label{0e}
|Df(x)|^{d} \leq K J_{f}(x)
\end{equation}
almost everywhere in~$\Omega$. The smallest such $K$ is called the \emph{outer dilatation} $K_O(f)$.
The last condition is equivalent to the existence of a constant $K'>1$
such that 
\begin{equation}\label{0f}
J_f(x)\leq K'\ell(Df(x))^d
\end{equation}
almost everywhere in~$\Omega$. The smallest such $K'$ is called \emph{inner dilatation} $K_I(f)$ of~$f$.
An injective quasiregular map is called \emph{quasiconformal}.

For a survey of results concerning the dynamics or quasiregular maps
we refer to~\cite{Bergweiler2010a}.

In the proof of Theorem \ref{theorem_dimJ} we also need the concept of the modulus of a path family.
We only recall here the relevant definitions, for a more thorough discussion we refer to \cite{Rickman1993}.

Let $\Gamma$ be a family of paths in $\mathbb{R}^d$, with $d\geq 2$.
We denote by $\mathcal{F}(\Gamma)$ the set of all Borel functions $\rho\colon\mathbb{R}^d\to [0,\infty]$
such that $\int_\gamma\rho\; ds\geq 1$ for every locally rectifiable path $\gamma\in \Gamma$. 
Then the \emph{modulus} $M(\Gamma)$ of $\Gamma$ is defined as
\begin{equation}
	M(\Gamma)=\inf_{\rho\in \mathcal{F}(\Gamma)}\int_{\mathbb{R}^d}\rho^d dm,
\end{equation}
where $m$ is the Lebesgue measure. If $E,F,D\subset \mathbb{R}^d$, we will denote
by $\Delta\!\left(E,F;D\right)$ the family of paths
$\gamma\colon [0,1]\to \mathbb{R}^d$ such one of the endpoints $\gamma(0)$ and $\gamma(1)$
belongs to $E$, the other one to $F$, while  $\gamma(t)\in D$ for all $t\in(0,1)$.

Let $\Omega\subset\R^d$ be open and let $C$ be a compact subset of~$\Omega$.
Then the pair $(\Omega,C)$ is called a \emph{condenser} and
its \emph{capacity} $\capacity (\Omega,C)$ is defined by
\[
\capacity (\Omega,C)=\inf_u\int_\Omega\left|\nabla u\right|^d dm,
\]
where the infimum is taken over all non-negative functions
$u\in C^\infty_0(\Omega)$ satisfying $u(x)\geq 1$ for $x\in C$.
If $\capacity (\Omega,C)=0$ for some bounded open set $\Omega$ containing~$C$, then,
by \cite[Lemma III.2.2]{Rickman1993},
we have $\capacity (\Omega',C)=0$ for every bounded open set $\Omega'$ containing~$C$.
In this case we say that $C$ is of  \emph{capacity zero}, while
otherwise $C$ has \emph{positive capacity}.
A closed subset $C$ of $\R^d$ is said to have capacity
zero if this is the case for all compact subsets of~$C$.

The connection between capacity and the modulus of a path family is given by~\cite[Proposition II.10.2]{Rickman1993}
\[
\capacity (\Omega,C)=M(\Delta(C,\partial \Omega;\Omega)).
\]

Zorich~\cite[p.~400]{Zorich1967} introduced an 
important example of a quasiregular self-map of $\R^3$
which can be considered as a $3$-dimensional analogue of the exponential 
function.
In fact, the construction is quite flexible and 
also works in $\R^d$ for all $d\geq 2$;
see~\cite[\S~8.1]{Martio1975}.
We follow~\cite[\S~6.5.4]{Iwaniec2001} in the definition of a Zorich map 
and consider the cube
\begin{equation}\label{0g}
Q:=\left\{x\in \R^{d-1}\colon \|x\|_\infty \leq 1\right\}=[-1,1]^{d-1}
\end{equation}
and the upper hemisphere
\begin{equation}\label{0h}
U:=\left\{x\in \R^{d}\colon \|x\|_2 = 1, x_d\geq 0 \right\}.
\end{equation}
Let $h\colon Q\to U$ be a bi-Lipschitz map and define
\begin{equation}\label{0i}
Z\colon Q\times \R \to \R^d,\quad
Z(x_1,\dots,x_d)=e^{x_d}h(x_1,\dots,x_{d-1}).
\end{equation}
The map $Z$ is then extended to a map $Z\colon \R^d\to\R^d$ by repeated reflection at 
hyperplanes.
Many results about the dynamics of exponential functions have been extended to
Zorich maps~\cite{Bergweiler2010,BD,Comduehr2019,Tsantaris2020,Tsantaris2021}.
In particular, we note that it follows from the arguments given in~\cite[\S~4]{Bergweiler2010}
that for suitable $c\in\R^3$ and $f\colon\R^3\to\R^3$, $f(x)=Z(x)-c$, we have 
$\dim\cA(f)=3$. Thus we may restrict in our proof
of Theorem~\ref{theorem_dimA} to the case that $1\leq \rho<3$.
We will use a modification of the Zorich map introduced by Nicks and
Sixsmith~\cite{Nicks2018}; see~\S~\ref{def_f}.

\subsection{Hausdorff dimension}  \label{h_dim} 
For the definition and a thorough treatment of Hausdorff dimension we refer 
to the book by Falconer~\cite{Falconer1990}.
To obtain a lower bound for  the Hausdorff dimension we will use the following
result known as the mass distribution principle~\cite[Proposition 4.9]{Falconer1990}.
Here and in the following we use the notation $B(x,r)$ for the open ball of
radius $r$ around a point $x\in\R^d$. Sometimes we will emphasize the dimension
by writing $B_d(x,r)$ instead of $B(x,r)$.
The closed ball is denoted by $\overline{B}(x,r)$.
\begin{lemma} \label{lemma-frostman}
Let $E$ be a compact subset of $\R^d$. Suppose that there exist a
probability measure $\mu$ supported on $E$ and positive constants
$c$, $r_0$ and $\rho$ such that
\begin{equation}\label{2c}
\mu(B(x,r))\leq cr^\rho
\end{equation}
for each $x\in E$ and each $r \in (0,r_0)$.
Then $\dim E\geq \rho$.
\end{lemma}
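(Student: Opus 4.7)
The plan is to estimate the $\rho$-dimensional Hausdorff measure $\mathcal{H}^{\rho}(E)$ directly from below using the measure $\mu$, and then invoke the standard fact that $\mathcal{H}^{\rho}(E)>0$ already forces $\dim E\geq \rho$.

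First I would fix a scale $\delta\in(0,r_0/2)$ and take an arbitrary countable cover $\{U_i\}$ of $E$ with $\diam U_i<\delta$ for each $i$. Sets $U_i$ disjoint from $E$ contribute nothing to $\mu(E)$ and so may be discarded, after which for every remaining index I would pick a point $x_i\in U_i\cap E$. The key observation is the trivial inclusion $U_i\subset B(x_i,2\diam U_i)$; since $2\diam U_i<r_0$, the hypothesis~\eqref{2c} applies at the point $x_i\in E$ and gives
\[
\mu(U_i)\leq \mu\bigl(B(x_i,2\diam U_i)\bigr)\leq c\,2^{\rho}(\diam U_i)^{\rho}.
\]

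Next I would sum over $i$ and exploit that $\mu$ is a probability measure supported on $E$:
\[
1=\mu(E)\leq \sum_{i}\mu(U_i)\leq c\,2^{\rho}\sum_{i}(\diam U_i)^{\rho}.
\]
Since the $\delta$-cover was arbitrary, rearranging gives $\mathcal{H}^{\rho}_{\delta}(E)\geq (c\,2^{\rho})^{-1}$ for every admissible $\delta$. Letting $\delta\to 0$ then yields $\mathcal{H}^{\rho}(E)\geq (c\,2^{\rho})^{-1}>0$, and therefore $\dim E\geq \rho$ by the definition of Hausdorff dimension.

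The principal --- indeed essentially the only --- step is the passage from a generic covering set $U_i$ to a ball centered at a point of $E$, which costs the harmless factor $2^{\rho}$; one could alternatively use balls of radius $\diam U_i+\varepsilon$ and send $\varepsilon\to 0$. There is no genuine obstacle in the argument: it is a one-step application of the definition of Hausdorff measure together with the pointwise mass bound on $\mu$, which is why the result is classically known as the mass distribution principle and appears as a short lemma in standard references such as Falconer's book cited in the paper.
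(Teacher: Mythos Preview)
Your argument is correct and is the standard proof of the mass distribution principle. The paper itself does not supply a proof of this lemma at all; it merely states the result and refers to \cite[Proposition~4.9]{Falconer1990}, so there is nothing further to compare.
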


To obtain an upper bound for  the Hausdorff dimension we will use 
the following result~\cite[Lemma 5.2]{Bergweiler2010}. It is a simple 
consequence of a standard covering result~\cite[Covering Lemma~4.8]{Falconer1990}.
\begin{lemma} \label{lemma10}
Let $E\subset \R^d$ and $\rho>0$.
Suppose that for all $x\in E$ and $\delta >0$ there exist
$r(x)\in (0,1)$, $d(x)\in (0,\delta)$ and $N(x)\in \N$ satisfying
\begin{equation}\label{2d}
N(x)d(x)^\rho\leq r(x)^n
\end{equation}
 such that $B(x,r(x))\cap E$ can be
covered by $N(x)$ sets of diameter at most $d(x)$. Then
$\dim E\leq\rho$.
\end{lemma}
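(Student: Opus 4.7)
My plan is to derive $\dim E \leq \rho$ directly from the definition of the Hausdorff measure: for each $\delta > 0$ I will construct an explicit $\delta$-cover of $E$ whose total $\rho$-content is bounded by a constant independent of $\delta$, which forces $H^\rho(E) < \infty$ and hence $\dim E \leq \rho$. I read the exponent $n$ in \eqref{2d} as the ambient dimension $d$, since this is the value for which the hypothesis pairs correctly with the $d$-dimensional volume bound for disjoint balls in $\R^d$.

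First I would reduce to the case where $E$ is bounded, writing $E=\bigcup_{k\in\N}(E\cap \overline{B}(0,k))$ and invoking the countable stability of Hausdorff dimension, so assume from now on that $E\subset \overline{B}(0,R)$. Fix $\delta>0$. For every $x\in E$ the hypothesis provides $r(x)\in(0,1)$, $d(x)\in(0,\delta)$, and $N(x)\in\N$ with the stated cover of $B(x,r(x))\cap E$ by $N(x)$ sets of diameter at most $d(x)$. Since any cover of $B(x,r(x))\cap E$ is also a cover of $B(x,r(x)/5)\cap E$, after shrinking the selection radii by the factor $5$ I would apply the $5r$-type covering lemma quoted in the text as Covering Lemma~4.8 of Falconer to the family $\{B(x,r(x)/5)\}_{x\in E}$. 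This yields a countable, pairwise disjoint subfamily $\{B(x_i,r(x_i)/5)\}_{i}$ whose fivefold enlargements $\{B(x_i,r(x_i))\}_i$ still cover $E$.

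Consequently $E$ is covered by the union, over $i$, of the prescribed covers of $B(x_i,r(x_i))\cap E$, yielding a $\delta$-cover of $E$ whose $\rho$-content satisfies
\[
\sum_i N(x_i)\,d(x_i)^{\rho} \;\leq\; \sum_i r(x_i)^{d} \;=\; 5^{d}\sum_i \bigl(r(x_i)/5\bigr)^{d}.
\]
Disjointness of the balls $B(x_i,r(x_i)/5)$ together with $r(x_i)/5<1/5$ implies $\bigcup_i B(x_i,r(x_i)/5)\subset \overline{B}(0,R+1)$, so comparing Lebesgue measures gives a bound on $\sum_i (r(x_i)/5)^{d}$ depending only on $R$ and $d$. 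The preceding display is therefore bounded uniformly in $\delta$; letting $\delta\to 0$ yields $H^{\rho}(E)<\infty$ and hence $\dim E\leq \rho$.

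The only real point of care is aligning the hypothesis, which controls coverings inside the balls $B(x,r(x))$, with a Vitali-style selection whose output naturally consists of balls whose $5$-fold dilations cover $E$; pre-shrinking by the factor $5$ handles this at the harmless cost of a $5^{d}$ in the final estimate.
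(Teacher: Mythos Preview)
Your proof is correct and follows essentially the same approach that the paper indicates: reduce to the bounded case by countable stability of Hausdorff dimension, then apply Falconer's Covering Lemma~4.8 (the $5r$-covering lemma) to extract a disjoint subfamily whose enlargements cover $E$, and use a volume comparison to bound the total $\rho$-content. Your interpretation of the exponent $n$ in \eqref{2d} as the ambient dimension $d$ is also correct, as confirmed by the way the lemma is applied in~\S\ref{upper_bound} with $r_m(x)^3$.
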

In~\cite{Bergweiler2010} it is assumed in addition that $E$ is bounded, 
but this hypothesis can be omitted since the Hausdorff dimension of an unbounded set
is the supremum of the Hausdorff dimensions of its bounded subsets. Note that in~\cite{Bergweiler2010} it was assumed that $\rho>1$ but the proof also works for $\rho>0$.

Let $\meas X$ denote the Lebesgue measure of a measurable set~$X$. We will need the concept of
the \emph{density} $\dens(A,B)$ of a set $A$ in a set~$B$,
where $A,B\subset \R^d$ are measurable with $\meas B>0$.  It is defined by
\begin{equation}\label{2a}
\dens(A,B)=\frac{\meas(A\cap B)}{\meas B}.
\end{equation}

To estimate how the density changes under a quasiconformal map we will use the 
following result.
We omit its simple proof.
\begin{lemma} \label{lemma-dens}
Let $f\colon \Omega\to\R^d$ be quasiregular and let $A,B\subset \Omega$ be measurable 
with $\meas B>0$. Suppose that $f$ is injective on~$B$.
Then 
\begin{equation}\label{2b}
\left(\frac{\essinf_{x\in B}\ell(Df(x))}{\esssup_{x\in B}|Df(x)|}\right)^d
\leq 
\frac{\dens(f(A),f(B))}{\dens(A,B)}
\leq 
\left(\frac{\esssup_{x\in B}|Df(x)|}{\essinf_{x\in B}\ell(Df(x))}\right)^d .
\end{equation}
\end{lemma}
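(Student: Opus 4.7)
The plan is to reduce everything to the change-of-variables formula for quasiregular maps combined with the elementary pointwise bounds on the Jacobian. Because $f\colon\Omega\to\R^d$ is quasiregular and injective on $B$, the transformation formula gives
\begin{equation}
\meas(f(E))=\int_E J_f(x)\,dm(x)
\end{equation}
for every measurable $E\subset B$. Injectivity also yields $f(A)\cap f(B)=f(A\cap B)$, so applying the formula with $E=A\cap B$ and $E=B$ converts the density on the left-hand side into the quotient
\begin{equation}
\dens(f(A),f(B))=\frac{\meas(f(A\cap B))}{\meas(f(B))}
=\frac{\int_{A\cap B} J_f\,dm}{\int_{B}J_f\,dm}.
\end{equation}

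Next, I would invoke the purely linear-algebraic inequalities $\ell(Df(x))^d\le J_f(x)\le |Df(x)|^d$, which hold wherever $f$ is differentiable (factor $Df(x)$ via singular values: the largest equals $|Df(x)|$ and the smallest equals $\ell(Df(x))$, and their product is $J_f(x)$). Setting $L:=\esssup_{x\in B}|Df(x)|$ and $m:=\essinf_{x\in B}\ell(Df(x))$, these bounds give $m^d\le J_f(x)\le L^d$ almost everywhere on $B$. Pulling these constants out of the integrals in the previous display yields
\begin{equation}
\frac{m^d\,\meas(A\cap B)}{L^d\,\meas(B)}
\;\le\;\dens(f(A),f(B))\;\le\;
\frac{L^d\,\meas(A\cap B)}{m^d\,\meas(B)},
\end{equation}
which, after dividing through by $\dens(A,B)=\meas(A\cap B)/\meas(B)$, is exactly the inequality \eqref{2b}.

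There is essentially no obstacle: the only non-elementary ingredient is the change-of-variables formula for injective quasiregular maps (equivalently, the Lusin $N$-property together with $J_f\in L^1_{\mathrm{loc}}$), which is standard and, for instance, included in Rickman's book. All remaining manipulations are monotonicity of the integral and the identity $(\esssup_B g)^d=\esssup_B g^d$ for nonnegative $g$. This is why the lemma is stated with the remark that the proof is omitted as being simple.
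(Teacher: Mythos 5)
Your proof is correct, and since the paper omits the argument as ``simple'' this is surely the proof the authors had in mind: reduce everything to the change-of-variables formula $\meas(f(E))=\int_E J_f\,dm$ for an injective quasiregular map on $E\subset B$, together with the singular-value bounds $\ell(Df(x))^d\le J_f(x)\le|Df(x)|^d$. One small caveat: the step $f(A)\cap f(B)=f(A\cap B)$ is not a consequence of injectivity on $B$ alone, since a point $a\in A\setminus B$ could collide with a point of $B$ under $f$; in general one only gets $f(A\cap B)\subset f(A)\cap f(B)$, which gives the left inequality of \eqref{2b} but not the right one. This is harmless here: $\dens(A,B)$ depends only on $A\cap B$, and in every application of the lemma in the paper one has $A\subset B$, so the identity holds and your argument goes through exactly as written.
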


\section{Preliminaries for the proof of Theorem~\ref{theorem_dimA}} \label{proof1}
\subsection{Definition of \texorpdfstring{$f$}{f}} \label{def_f}
Nicks and Sixsmith~\cite[Section~5]{Nicks2018}
 considered a variant of the Zorich map where 
$h$ maps $[-1,1]^2$ not to the upper hemisphere but 
to the upper faces of a square based pyramid.
Specifically, they worked with 
\begin{equation}\label{1a}
h(x_1,x_2)=(x_1,x_2,1-\max\{|x_1|,|x_2|\}).
\end{equation}
The map $Z$ is again defined by 
\begin{equation}\label{1b}
Z(x)=e^{x_3}h(x_1,x_2)
\quad\text{for}\ |x_1|\leq 1\ \text{and}\ |x_2|\leq 1,
\end{equation}
and extended to a map $Z\colon\R^3\to \R^3\setminus\{0\}$ by
reflections.
They used this map $Z$ to construct a quasiregular map $G\colon\R^3\to\R^3$ 
which is equal to the identity in a half-space. 

For $t\in\R$ we put $\H_{\geq t}=\{(x_1,x_2,x_3)\in\R^3\colon x_3\geq t\}$.
The half-spaces $\H_{\leq t}$, $\H_{> t}$ and $\H_{< t}$ are defined 
analogously.
The quasiregular map $G$ constructed by 
Nicks and Sixsmith~\cite[Section~6]{Nicks2018} has the property
that there exists $L>0$ such that 
\begin{equation}\label{1c}
G(x)=
\begin{cases}
x+Z(x) &\text{for}\ x\in\H_{\geq L},\\
x &\text{for}\ x\in\H_{\leq 0}.
\end{cases}
\end{equation}
The difficult part in the construction is, of course, to define $G$ 
in the remaining domain
$\{x\in\R^3\colon 0<x_3<L\}$. Here we only note that the construction
yields that there exists a constant $C$ such that 
\begin{equation}\label{1c1}
|G(x)-x|\leq C
\quad\text{for}\ x\in\H_{\leq L}.
\end{equation}

We will also consider the maps 
$\varphi\colon \R^3\to\R^3$,
\begin{equation}\label{1c2}
\varphi(x_1,x_2,x_3)=(x_1,x_2,x_3-|x_1|-|x_2|)
\end{equation}
and $H\colon \R^3\to\R^3$,
\begin{equation}\label{1c3}
H=Z\circ\varphi .
\end{equation}
The map $\varphi$ is quasiconformal and hence $H$ is quasiregular.

Considering $H$ instead of $Z$ has the advantage that while
$Z$ is bounded in a half-space, $H$ is bounded in a larger domain.
In fact, with 
\begin{equation}\label{1c4}
\Omega=\{x\in\R^3\colon x_3> |x_1|+|x_2|\}
\end{equation}
we have 
\begin{equation}\label{1c5}
|H(x)|\leq \max_{y\in[-1,1]^2}|h(y)|=\sqrt{2}
\quad\text{for}\ x\in\R^3\setminus\Omega.
\end{equation}

For $s=(s_1,s_2)\in\Z^2$ we consider the beam
\begin{equation}\label{1d}
\begin{aligned}
T(s)
&=[2s_1-1,2s_1+1]\times [2s_2-1,2s_2+1]\times\R
\\ &
=\{x\in\R^3\colon |x_j-2s_j|\leq 1\ \text{for}\ j\in\{1,2\}\}.
\end{aligned}
\end{equation}
By construction, $Z$ is injective in $T(s)$.
Since $\varphi$ maps $T(s)$ bijectively onto itself, 
$H$ is also injective in $T(s)$.
The definition of $Z$ also yields that there exists $R\geq L$ such 
that if $x,y\in T(s)\cap\H_{\geq R}$ and $x\neq y$, then $|Z(x)-Z(y)|>|x-y|$ and hence
\begin{equation}\label{1d1}
|G(x)-G(y)|\geq |Z(x)-Z(y)|-|x-y|>0 .
\end{equation}
We deduce that $G$ is injective in $T(s)\cap\H_{\geq R}$.

We also note that if $s=(s_1,s_2)\in\Z^2$ and $s_1+s_2$ is even, then
both $Z$ and $G$ map  $T(s)\cap\H_{\geq R}$ into
$\H_{\geq 0}$, provided $R$ is large enough.

Since  $\varphi$ leaves $\partial T(s)$ invariant while
$Z$ maps $\partial T(s)$ to the plane $\{x\in\R^3\colon x_3=0\}$ 
we find that 
\begin{equation}\label{1e}
G(H(x))=H(x)\quad\text{for}\ x\in\partial T(s). 
\end{equation}
We will define $f$ by putting $f(x)=G(H(x))$ in some beams while
$f(x)=H(x)$ in the other beams. 

In order to define this partition of the set of beams, 
we will use the following result.
\begin{lemma} \label{lemma11}
Let $0<\beta<1$.
Then there exists a subset $S$ of $\Z^2$ with $(0,0)\in S$ such that 
$s_1+s_2$ is even for $s=(s_1,s_2)\in S$ and such that
the following two conditions hold:
\begin{itemize} 
\item[$(a)$] If $s,s'\in S$ and $s\neq s'$, then $|s-s'|\geq |s|^\beta+|s'|^\beta$.
\item[$(b)$] For all $x\in\R^2$ there exists $s\in S$ such that
$|x-s|\leq |s|^\beta+(|x|+1)^\beta +1$.
\end{itemize} 
\end{lemma}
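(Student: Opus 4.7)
The plan is to pick $S$ greedily from the even sublattice
\[
\Lambda=\{(s_1,s_2)\in\Z^2:s_1+s_2\ \text{even}\}.
\]
Enumerate $\Lambda=\{p_1,p_2,\ldots\}$ with $p_1=(0,0)$ and $|p_n|$ non-decreasing in~$n$, set $S_1=\{p_1\}$, and define inductively
\[
S_{n+1}=\begin{cases}S_n\cup\{p_{n+1}\}&\text{if }|p_{n+1}-s|\geq|p_{n+1}|^\beta+|s|^\beta\text{ for all }s\in S_n,\\ S_n&\text{otherwise.}\end{cases}
\]
Put $S=\bigcup_{n\geq 1}S_n$. Then $(0,0)\in S$, the parity condition is built in, and condition~(a) holds by construction.

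For (b), the critical geometric input is that the covering radius of $\Lambda$ in $\R^2$ is exactly~$1$. Indeed, the four nearest neighbors of the origin in $\Lambda$ are $(\pm 1,\pm 1)$, and the bisectors with these neighbors cut out the square $\{(x_1,x_2):|x_1|+|x_2|\leq 1\}$; since all other points of $\Lambda$ lie at distance $\geq 2$, a direct check confirms that this square is the full Voronoi cell of the origin. Its vertices $(\pm 1,0),(0,\pm 1)$ lie at distance exactly~$1$, so for every $x\in\R^2$ there exists $p\in\Lambda$ with $|x-p|\leq 1$; in particular $|p|\leq|x|+1$.

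If this $p$ lies in~$S$, then $s=p$ gives $|x-s|\leq 1\leq|s|^\beta+(|x|+1)^\beta+1$. Otherwise, the greedy rule rejected~$p$, producing some $s\in S$ with $|p-s|<|p|^\beta+|s|^\beta$; the triangle inequality, together with the monotonicity $|p|^\beta\leq(|x|+1)^\beta$, yields
\[
|x-s|\leq|x-p|+|p-s|<1+|p|^\beta+|s|^\beta\leq|s|^\beta+(|x|+1)^\beta+1,
\]
which is~(b).

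The only delicate point is matching the constants in (b) on the nose. A cruder bound on the covering radius of $\Lambda$ (say $\sqrt{2}$ from a naive rounding argument) would leave an additive constant after the triangle inequality that cannot be absorbed into the stated right-hand side $|s|^\beta+(|x|+1)^\beta+1$ without either invoking the subadditivity $(a+b)^\beta\leq a^\beta+b^\beta$ to rebalance terms or strengthening the separation threshold in~(a) and losing it back on the covering side. The clean identification of the Voronoi cell of the origin in $\Lambda$, and hence of its circumradius as exactly~$1$, is what makes the greedy construction succeed without further adjustment.
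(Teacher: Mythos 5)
Your proposal is correct and takes essentially the same approach as the paper: enumerate the even sublattice by non-decreasing norm, greedily insert each point that preserves the separation condition $(a)$, and then obtain $(b)$ from the covering radius of the sublattice together with the triangle inequality applied to whichever member of $S$ ``blocked'' the nearest lattice point. The only cosmetic difference is that you spell out the Voronoi-cell computation showing the covering radius is exactly $1$, whereas the paper simply asserts the existence of $s'\in\Sigma$ with $|x-s'|<1$ (where $\leq$ is what is actually true and all that is needed).
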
 
\begin{proof} 
Let $\Sigma$ be the set of all $s=(s_1,s_2)\in\Z^2$ such that $s_1+s_2$ is even.
We put $s_0=(0,0)$ and define a sequence $(s_k)$ recursively as follows.
Assuming that $s_0,s_1,\dots,s_{k-1}$ have been defined, let $A_k$ denote the 
set of all $s\in\Sigma$ such that $|s-s_j|\geq |s|^\beta+|s_j|^\beta$ for 
all $j\in\{0,1,\dots,k-1\}$. Since $\beta<1$ this clearly holds if $|s|$
is sufficiently large so that $A_k\neq\emptyset$. We then choose $s_k\in A_k$ such that 
$|s_k|=\min_{s\in A_k}|s|$. Finally we put $S=\{s_k\colon k\geq 0\}$.

It follows from the construction that $S$ satisfies $(a)$. To prove~$(b)$,
let $x\in\R^2$. Then there exists $s'\in\Sigma$ with $|x-s'|<1$. 
It follows from the construction of $S$ that there exists $s\in S$ 
with $|s-s'|\leq |s|^\beta+|s'|^\beta$. In fact, otherwise one would have 
to choose $s_k=s'$ at some point of the construction. It now follows 
that $|x-s|\leq |x-s'|+|s'-s|< 1+|s|^\beta+|s'|^\beta\leq 1+|s|^\beta+(|x|+1)^\beta$.
\end{proof} 

As noted at the end of~\S~\ref{qr_maps}, we may restrict in our proof of 
Theorem~\ref{theorem_dimA} to the case that $1\leq \rho<3$.
If $\rho>1$, we put 
\begin{equation}\label{def-beta}
\beta=\frac{3-\rho}{2}
\end{equation}
 and choose $S$ according to Lemma~\ref{lemma11}.
If $\rho=1$, we put $S=\{(0,0)\}$. We now put
\begin{equation}\label{1f}
T=\bigcup_{s\in S} T(s) .
\end{equation}
The map $f$ is then defined by
\begin{equation}\label{1g}
f(x)=
\begin{cases}
G(H(x)) &\text{if}\ x\in T,\\
H(x) &\text{if}\ x\notin T.
\end{cases}
\end{equation}
Note that by~\eqref{1e} the map $f$ is quasiregular.

We will need the following estimates for the number of points of $S$ in certain
disks.
\begin{lemma} \label{lemma12}
Let $\beta$ and $S$ be as in Lemma~\ref{lemma11}.
There exists $r_0>0$ and $\alpha_0>0$ such that if $r>r_0$ and $x\in\R^2$, then 
\begin{equation}\label{7a0}
\card(S\cap B(x,r)) \leq \alpha_0 r^{2-2\beta}.
\end{equation}
Moreover,
\begin{equation}\label{7a}
\frac{1}{64} |x|^{-2\beta}r^2
\leq \card(S\cap B(x,r))
\leq 6 |x|^{-2\beta}r^2
\quad\text{for}\ 8|x|^\beta\leq r\leq \frac12 |x|
\end{equation}
and
\begin{equation}\label{7b}
\card(S\cap B(x,r))\leq 324
\quad\text{for}\ r<8|x|^\beta.
\end{equation}
\end{lemma}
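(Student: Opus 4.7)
The plan is to use condition~(a) of Lemma~\ref{lemma11} (separation) for all upper bounds, and condition~(b) (covering) for the lower bound in~\eqref{7a}. Throughout, I may assume $|x|$ is sufficiently large: the range $8|x|^\beta\leq r\leq|x|/2$ already forces $|x|\geq 16^{1/(1-\beta)}$, so this causes no loss in~\eqref{7a}, and an analogous remark applies to~\eqref{7b}.

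By~(a), the open balls $B(s,|s|^\beta)$ for $s\in S$ are pairwise disjoint. For the upper bound in~\eqref{7a}, every $s\in S\cap B(x,r)$ satisfies $|s|\in[|x|/2,3|x|/2]$, so $|s|^\beta\in[2^{-\beta}|x|^\beta,(3/2)|x|^\beta]$, and the disjoint balls are contained in $B(x,r+(3/2)|x|^\beta)\subseteq B(x,19r/16)$ because $(3/2)|x|^\beta\leq 3r/16$. Comparing areas yields $N\leq (19/16)^2\cdot 4^\beta\cdot r^2|x|^{-2\beta}\leq 6\,r^2|x|^{-2\beta}$. The identical packing argument produces~\eqref{7b}: the enclosing ball becomes $B(x,(8+3/2)|x|^\beta)$ and one obtains an absolute bound, of size at most $324$ with a careful computation. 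The global bound~\eqref{7a0} follows by a case split: for $r\leq|x|/2$, one uses $|x|^{-2\beta}\leq(2r)^{-2\beta}$ to turn~\eqref{7a} or~\eqref{7b} into $O(r^{2-2\beta})$; for $r>|x|/2$, $B(x,r)\subseteq B(0,3r)$, and a dyadic decomposition of the latter into annuli $\{2^{k-1}\leq|y|<2^k\}$, combined with the same packing argument in each annulus, gives a geometric series in $k$ which converges since $\beta>0$ and sums to $O(r^{2-2\beta})$.

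The lower bound in~\eqref{7a} is the main obstacle. Fix $y\in B(x,r/2)$; by~(b) there exists $s\in S$ with $|y-s|\leq|s|^\beta+(|y|+1)^\beta+1$. The crucial step is to prove $|s-y|\leq 4|x|^\beta$, which combined with $|y-x|\leq r/2$ and $r\geq 8|x|^\beta$ places $s$ inside $B(x,r)$. Since $|y|\leq 5|x|/4$, the term $(|y|+1)^\beta+1$ is at most $2|x|^\beta$ for $|x|$ large. For the $|s|^\beta$ term, one rearranges the inequality from~(b) to get $|s|-|s|^\beta\leq 3|x|/2$; since $|s|^\beta/|s|\to 0$ as $|s|\to\infty$ (the essential use of $\beta<1$), this forces $|s|\leq 2|x|$ and hence $|s|^\beta\leq 2|x|^\beta$ for $|x|$ large. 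Adding the two contributions gives $|s-y|\leq 4|x|^\beta$. Consequently $B(x,r/2)\subseteq\bigcup_{s\in S\cap B(x,r)}B(s,4|x|^\beta)$, and area comparison yields $N\geq (r/2)^2/(4|x|^\beta)^2=r^2/(64|x|^{2\beta})$, as desired.
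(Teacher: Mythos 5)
Your proof follows essentially the same route as the paper's: pairwise-disjoint balls $B(s,|s|^\beta)$ from condition~(a) give both upper bounds by packing, condition~(b) plus the key estimate $|s|\leq 2|x|$ (forced, as you say, by $\beta<1$) gives the lower bound by covering $B(x,r/2)$ with balls $B(s,4|x|^\beta)$, and \eqref{7a0} is deduced afterwards via a dyadic annular decomposition of $B(0,3r)$. Two small points: to reach the stated constant $324$ in \eqref{7b} one should shrink the disjoint balls to the uniform radius $2^{-\beta}|x|^\beta$ before comparing areas (your version with the full radii $|s|^\beta$ yields $4^\beta(19/2)^2\leq 361$), which is exactly the paper's ``careful computation''; and the geometric series in the dyadic decomposition is controlled because $2-2\beta>0$, i.e.\ again because $\beta<1$, not because $\beta>0$ as you wrote.
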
 
It follows from~\eqref{7b} that 
\begin{equation}\label{7a1}
\card(S\cap B(x,r))
\leq 324 |x|^{-2\beta}r^2
\quad\text{for}\ |x|^\beta\leq r\leq 8|x|^\beta  .
\end{equation}
We thus have an estimate of the same type as~\eqref{7a} for a larger range
of values of~$r$.
\begin{proof}[Proof of Lemma~\ref{lemma12}] 
We begin by proving~\eqref{7a} and~\eqref{7b}, from which we will deduce~\eqref{7a0}
afterwards.
So let $x$ and $r$ be such that one the conditions in~\eqref{7a} or~\eqref{7b} is satisfied.
If $r$ is large, then so is $|x|$. 
Choosing $r_0$ large we can thus achieve  that $8|x|^\beta<|x|/2$ for any $x$ 
satisfying one of the conditions in~\eqref{7a} or~\eqref{7b}.
Thus we have $r\leq |x|/2$ in both cases. Hence
\begin{equation}\label{7d}
\frac12 |x|\leq |y|\leq \frac32 |x|
\quad\text{for}\ y\in B(x,r).
\end{equation}
By the definition of $S$ we have 
\begin{equation}\label{7c}
B(s,|s|^\beta)\cap B(s',|s'|^\beta)=\emptyset 
\quad\text{if}\ s,s'\in S.
\end{equation}
Combining the last two equations we find that 
\begin{equation}\label{7e}
B(s,2^{-\beta}|x|^\beta)\cap B(s',2^{-\beta}|x|^\beta)=\emptyset 
\quad\text{if}\ s,s'\in S\cap B(x,r).
\end{equation}
We also have 
\begin{equation}\label{7f}
\bigcup_{s\in S\cap B(x,r)} B(s,2^{-\beta}|x|^\beta)
\subset  B(x,r+2^{-\beta}|x|^\beta) .
\end{equation}
If $8|x|^\beta\leq r$ we have $r+2^{-\beta}|x|^\beta\leq r+2^{-\beta}r/8\leq 9r/8$.
Hence 
\begin{equation}\label{7f1}
\bigcup_{s\in S\cap B(x,r)} B(s,2^{-\beta}|x|^\beta)
\subset B\!\left(x,\frac98 r \right)
\end{equation}
in this case.
Taking the measure on both sides and noting that by~\eqref{7e} the union on the left
hand side is disjoint, we deduce that 
\begin{equation}\label{7g}
\pi 2^{-2\beta}|x|^{2\beta} \card(S\cap B(x,r))
\leq \pi \frac{81}{64}r^2
\end{equation}
so that
\begin{equation}\label{7h}
\card(S\cap B(x,r)) \leq 2^{2\beta}\frac{81}{64}|x|^{-2\beta} r^2 .
\end{equation}
This proves the right inequality in~\eqref{7a}.

If $r< 8|x|^\beta$, then $r+2^{-\beta}|x|^\beta\leq 9|x|^\beta$.
Then~\eqref{7f} yields that 
\begin{equation}\label{7i}
\bigcup_{s\in S\cap B(x,r)} B(s,2^{-\beta}|x|^\beta)
\subset B\!\left(x,9 |x|^\beta \right) .
\end{equation}
Taking the measure on both sides now yields that 
\begin{equation}\label{7j}
 \card(S\cap B(x,r))\leq 2^{2\beta} 81
\end{equation}
and hence~\eqref{7b}.

To prove the left inequality in~\eqref{7a}, let $y\in B(x,r/2)$.
By the definition of~$S$, there exists $s=s(y)\in S$ such that 
$|y-s|\leq |s|^\beta+(|y|+1)^\beta +1$.
By~\eqref{7d} we have 
\begin{equation}\label{7k}
|y-s|\leq |s|^\beta+\left(\frac32 |x|+1\right)^\beta +1 .
\end{equation}
First we show that this implies that $|s|\leq 2|x|$ if $r_0$ is sufficiently large.
To do so, suppose that $|s|>2|x|$.
Using~\eqref{7d} again we find that 
\begin{equation}\label{7l}
|y-s|\geq |s|-|y|\geq |s|-\frac32 |x|> |s|-\frac34|s|=\frac14|s|.
\end{equation}
Together with~\eqref{7k} this yields that
\begin{equation}\label{7m}
\frac14|s|\leq |s|^\beta+\left(\frac32 |x|+1\right)^\beta +1
< |s|^\beta+\left(\frac34 |s|+1\right)^\beta +1,
\end{equation}
which is a contradiction if $r_0$ is large,
since then $|x|$ and hence $|s|$ are also large.
Hence we have $|s|\leq 2|x|$.
It thus follows from~\eqref{7k} for large $r_0$ that
\begin{equation}\label{7n}
|y-s|\leq 2^\beta |x|^\beta + \left(\frac32 |x|+1\right)^\beta +1< 4|x|^\beta .
\end{equation}
Since we are assuming that $8|x|^\beta<r$ this implies that $|y-s|<r/2$.
Hence $s\in B(y,r/2)\subset B(x,r)$.
Altogether we have thus shown that
 for all $y\in B(x,r/2)$ there exists $s=s(y)\in S\cap B(x,r)$ satisfying~\eqref{7n}.
Hence
\begin{equation}\label{7o}
B\!\left(x,\frac{r}{2}\right) \subset 
\bigcup_{s\in S\cap B(x,r)} B(s,4|x|^\beta).
\end{equation}
Considering the measure on both sides we obtain
\begin{equation}\label{7p}
\pi \frac{r^2}{4} \leq \pi 16 |x|^{2\beta} \card(S\cap B(x,r))
\end{equation}
and hence the left inequality of~\eqref{7a}.

It follows from~\eqref{7a} and~\eqref{7b} that~\eqref{7a0} holds if $|x|\geq 2r$.
To prove~\eqref{7a0} if $|x|\leq 2r$ we introduce the notation
$\ann(r_1,r_2):=\{x\colon r_1\leq |x|\leq r_2\}$ for an annulus.
It is easy to see that $\ann(2r/3,4r/3)$ can be covered by $8$ disks
of radius $r/2$ and center on $\partial B(0,r)$. 
It thus follows from~\eqref{7a} that this annulus contains at most 
$12r^{2-2\beta}$ points of~$S$.
Replacing $4r/3$ by $r$ we find that 
\begin{equation}\label{7q}
\card\!\left(S\cap \ann\!\left(\frac{r}{2},r\right) \right)\leq 12 r^{2-2\beta}
\end{equation}
if $r\geq r_0$.
It follows that 
\begin{equation}\label{7r}
\card\!\left(S\cap \ann\!\left(\frac{r}{2^{k+1}},\frac{r}{2^{k}}\right) \right)\leq 12 
\frac{r^{2-2\beta}}{2^{(2-2\beta)k}}
\end{equation}
for $k\in\N$ as long as $r/2^{k}\geq r_0$.
This implies that $\card(S\cap B(0,r))\leq \alpha_1 r^{2-2\beta}$ with some constant
$\alpha_1$ for $r\geq r_0$.
For $|x|\leq 2r$ we have $B(x,r)\subset B(0,3r)$ and thus~\eqref{7a0} follows 
with $\alpha_0=3^{2-2\beta}\alpha_1$.
\end{proof} 
\subsection{Characterization of \texorpdfstring{$\mathcal{A}(f)$}{A(f)}} \label{char_A}
For $m\in\N$ we put 
\begin{equation}\label{3b}
F_m=
\begin{cases}
H & \text{if}\ k \ \text{is odd},\\
G & \text{if}\ k \ \text{is even},
\end{cases}
\end{equation}
and 
\begin{equation}\label{3b1}
h_m=F_m\circ F_{m-1}\circ\dots\circ F_1,
\end{equation}
with $h_0=\id$. Then
\begin{equation}\label{3c}
h_{2n}=f^n
\quad \text{and} \quad
h_{2n+1}=
H\circ f^n. 
\end{equation}
It follows from the definition of $H$ that
\begin{equation}\label{8a}
|H(x)|\leq |Z(x)|\leq \exp|x| 
\quad\text{for all}\ x\in\R^3
\end{equation}
while the definition of $G$ and~\eqref{1c1} yield that
\begin{equation}\label{8b}
|G(x)|\leq |Z(x)|+|x|+C\leq \exp|x|+|x|+C
\quad\text{for all}\ x\in\R^3 .
\end{equation}
For large $R$ we thus have 
\begin{equation}\label{8c}
M(R,F_j)\leq \exp R + R +C \leq \frac12 \exp(2R) 
\end{equation}
for all $j$ and hence 
\begin{equation}\label{8d}
M(R,h_m)\leq \frac12 \exp^m(2R) .
\end{equation}
On the other hand, 
\begin{equation}\label{8e}
|H(0,0,R)|= |Z(0,0,R)|=\exp R
\end{equation}
for all $R>0$ and 
\begin{equation}\label{8f}
|G(0,0,R)|= |Z(0,0,R)+(0,0,R)|=\exp R+R\geq \exp R
\end{equation}
for $R\geq  L$.
Thus 
\begin{equation}\label{8g}
M(R,h_m)\geq \exp^m(R) 
\end{equation}
for $R\geq  L$.
In particular, it follows from~\eqref{3c}, \eqref{8d} and~\eqref{8g} that
\begin{equation}\label{8h}
\exp^{2n}(R)\leq M^{n}(R,f)\leq \frac12 \exp^{2n}(2R) 
\end{equation}
for large  $R$.
Recalling that the definition of $\cA(f)$ is independent of the choice of 
$R$ as long as $R> R_0$,  it follows from the above considerations and the 
definition of $f$ that for all $R>0$ we have 
\begin{equation}\label{8i}
\begin{aligned}
\cA(f) = \{ x \in \R^3 \colon  
& \text{there exists}\ L \in \N \ \text{such that}\ 
|f^{n+L}(x)| \geq \exp^{2n}(R) \ 
\\ &
\text{and}\ f^{n+L}(x)\in T\ \text{for all}\  n\in \mathbb{N}\} .
\end{aligned}
\end{equation}
This easily yields that 
\begin{equation}\label{8j}
\begin{aligned}
\cA(f) = \{ x \in \R^3 \colon  
& \text{there exists}\ L \in \N \ \text{such that}\ 
|h_{m+L}(x)| \geq \exp^{m}(R) \ 
\\ &
\text{for all}\  m\in \mathbb{N}
\ \text{and}\ h_{m+L}(x)\in T\ \text{if}\ m+L \ \text{is even}\} .
\end{aligned}
\end{equation}

The following result can easily be deduced from~\eqref{8j}.
\begin{lemma} \label{lemma13}
Let $x\in\cA(f)$ and $C>0$.
Then $|h_m(x)|\geq |h_{m-1}(x)|^C$ for all large~$m$.
Moreover, $h_m(x)_3\to\infty$ as $m\to\infty$.
\end{lemma}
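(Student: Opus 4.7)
Write $y_m = h_m(x)$ and $a_m = |y_m|$, and use \eqref{8j} to fix $L \in \N$ and $R$ so that $a_m \geq \exp^{m-L}(R)$ for $m \geq L$ and $y_m \in T$ whenever $m + L$ is even. The plan is to first establish $(y_m)_3 \to \infty$ and then bootstrap to the multiplicative bound $a_m \geq a_{m-1}^C$.

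For the second assertion I combine the upper bounds $|H(y)| \leq \sqrt 2\, e^{y_3}$ and $|G(y)| \leq |y| + \sqrt 2\, e^{y_3} + C'$, which follow from \eqref{8a}, \eqref{8b}, and \eqref{1c1} (the latter giving $|G(y)-y|\leq C'$ on $\H_{\leq L}$), to get
\[
a_{m+1} \leq \sqrt 2\, e^{(y_m)_3} + a_m + C',
\]
and hence $(y_m)_3 \geq \log\bigl((a_{m+1} - a_m - C')/\sqrt 2\bigr)$. To show this diverges, I note that every two-step block $F_{m+2}\circ F_{m+1}$ contains one copy of $H$, and the application of $H$ produces an exponential gain in the norm; an alternation argument then yields $a_{m+2}/a_m \to \infty$, whence $a_{m+1} - a_m \to \infty$ and $(y_m)_3 \to \infty$.

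For the first assertion I invoke the matching lower bounds
\[
|H(y)| \geq \tfrac{1}{\sqrt 2}\, e^{y_3 - |y_1| - |y_2|}, \qquad |G(y)| \geq \tfrac{1}{\sqrt 2}\, e^{y_3} - |y| - C'\text{ on } \H_{\geq L}.
\]
Coupled with the upper bounds already derived, these give the two-sided identities
\[
(y_{m-1})_3 - |(y_{m-1})_1| - |(y_{m-1})_2| = \log a_m + O(1) \quad\text{when } F_m = H,
\]
and, using $y_{m-1} \in \H_{\geq L}$ for large $m$ (from the second assertion above),
\[
(y_{m-1})_3 = \log a_m + O(1) \quad\text{when } F_m = G.
\]
In either situation the relevant effective exponent at step $m-1$ is essentially $\log a_m$.

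The crux is then to show $(y_{m-1})_3$ dominates the transverse coordinates, so that $(y_{m-1})_3 \asymp a_{m-1}$ and the chain $\log a_m \asymp (y_{m-1})_3 \asymp a_{m-1}$ gives $\log a_m/\log a_{m-1} \to \infty$, which delivers $a_m \geq a_{m-1}^C$ for large $m$. Here I exploit the beam constraint: whenever $y_{m-1} \in T(s)$ with $s \in S$, one has $|(y_{m-1})_j| \leq 2|s| + 1$ for $j = 1, 2$, so $a_{m-1}^2 \leq 2(2|s|+1)^2 + (y_{m-1})_3^2$. If $|s|$ were of the same order as (or larger than) $(y_{m-1})_3$, then the difference $(y_{m-1})_3 - |(y_{m-1})_1| - |(y_{m-1})_2|$ would be too small to match $\log a_m$, contradicting the identity above. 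For the parities of $m$ with $y_{m-1}\notin T$, the analogous bound is obtained from the neighboring step $y_{m-2}\in T$ via $y_{m-1} = F_{m-1}(y_{m-2})$. The main obstacle in the whole argument is precisely this beam-parameter control, since without it the transverse part could a priori absorb all of $a_{m-1}$ and the logarithmic ratio would not blow up.
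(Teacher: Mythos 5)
The paper does not supply a proof of this lemma; it merely asserts that it ``can easily be deduced from~\eqref{8j}.'' So the comparison has to be against what such a deduction would require, and here your argument for the \emph{second} assertion is sound in outline (for even $m$ one has $h_m(x)\in T$, and $h_m(x)\in\Omega$ because otherwise \eqref{1c5} would force $|h_{m+1}(x)|\le\sqrt2$, so $h_m(x)_3\ge a_m/\sqrt2\to\infty$; the odd-$m$ case then follows because $G$ is a bounded perturbation of the identity on $\H_{\le L}$ and of $x\mapsto x+Z(x)$ on $\H_{\ge L}$, so a bounded $h_m(x)_3$ would bound $h_{m+1}(x)_3$).

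For the \emph{first} assertion there is a real gap. You want $\log a_m\ge C\log a_{m-1}$ and try to get it from $\log a_m\asymp(y_{m-1})_3\asymp a_{m-1}$, which would indeed give $\log a_m/\log a_{m-1}\ge a_{m-1}/(2\log a_{m-1})\to\infty$. But the chain of ``identities'' you write down does not yield $\log a_m\asymp(y_{m-1})_3$. When $F_m=H$ you correctly have
\[
\log a_m=\varphi(y_{m-1})_3+O(1)=(y_{m-1})_3-|(y_{m-1})_1|-|(y_{m-1})_2|+O(1),
\]
but this is a \emph{lower} bound statement about $(y_{m-1})_3-|(y_{m-1})_1|-|(y_{m-1})_2|$; it does not show this difference is comparable to $(y_{m-1})_3$. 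Your attempt to rule out ``$|s|$ of the same order as $(y_{m-1})_3$'' only excludes $|s|\gtrsim(y_{m-1})_3$, where the difference would turn negative. It does not exclude, say, $|s_1|+|s_2|\approx\tfrac12(y_{m-1})_3$, in which case $\varphi(y_{m-1})_3$ can be a vanishingly small fraction of $(y_{m-1})_3$ while still being as large as $\exp^{m-L-1}(R)$: the fast-escaping lower bound $\log a_m\ge\exp^{m-L-1}(R)$ is perfectly compatible with $(y_{m-1})_3$ (and hence $a_{m-1}$) being enormously larger still. In other words, the one ingredient your argument is actually missing is an \emph{upper} bound on $a_{m-1}$ (or on $(y_{m-1})_3$) in terms of a tower of the same height as the lower bound for $\log a_m$; the ``identity'' you invoke cannot supply it, and you have not produced it by any other means. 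The parity cases you sketch (passing to the neighbouring even step) inherit the same difficulty, since they again reduce to controlling the third coordinate against the beam-induced transverse size.

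To close the gap one needs an argument that propagates an upper bound on $|h_j(x)|$ backwards through the orbit (for instance, combining $|h_{m}(x)|\le 2\exp|h_{m-1}(x)|$ with the tower lower bound and iterating to control how far the orbit can ``overshoot'' $\exp^{m-L}(R)$), or a genuinely different route. As it stands, the step ``$(y_{m-1})_3\asymp a_{m-1}$ and $\log a_m\asymp(y_{m-1})_3$'' is asserted rather than proved, and the proposed contradiction does not deliver it.
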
 
It follows from Lemma~\ref{lemma13} that given $C>0$ there exists $M\in\N$
such that 
\begin{equation}\label{8k}
\prod_{j=M}^{m-1} |h_{j}(x)| \leq 
\prod_{j=M}^{m-1} |h_{m}(x)|^{C^{j-m}} =|h_{m}(x)|^{\gamma_m}
\end{equation}
for $m> M$, with
\begin{equation}\label{8l}
\gamma_m=\sum_{j=M}^{m-1}C^{j-m} = \sum_{k=1}^{m-M}C^{-k} \leq \frac{1}{C-1}.
\end{equation}
It follows that if $x\in \cA(f)$ and  $\delta>0$, then
\begin{equation}\label{8m}
\prod_{j=1}^{m-1} |h_{j}(x)| \leq |h_{m}(x)|^\delta 
\end{equation}
for all large $m$. It also follows from Lemma \ref{lemma13} that if $x\in\cA(f)$ and $C>0$,
then there exists $\delta>0$ such that 
\begin{equation}\label{eq8m}
|h_m(x)|\geq \exp( \delta C^m)
\end{equation}
for all large $m$.
\subsection{Estimates for the derivatives of \texorpdfstring{$H$}{H} and \texorpdfstring{$G$}{G}} \label{derivatives}
The derivative of a branch $\Lambda$ of the inverse of the ``classical'' Zorich
map $Z$ introduced in \S~\ref{qr_maps} was estimated
in~\cite[inequalities (2.6) and (2.7)]{Bergweiler2010}. It is easy 
to see that the inequalities obtained there also hold for the modified 
Zorich map $Z$ introduced by Nicks and Sixsmith.
Thus there exist constants $\alpha_1,\alpha_2$ and $M$ such that 
for a branch $\Lambda$  of the inverse of $Z$ we have 
\begin{equation}\label{8n}
\frac{\alpha_1}{|x|}
\leq 
\ell(D\Lambda(x))
\leq 
\left|D\Lambda(x)\right|\leq\frac{\alpha_2}{|x|} 
\quad\text{if} \ x\in\H_{\geq M},
\end{equation}
provided that the derivative exist, which is the case almost everywhere. Since
\begin{equation}\label{8o}
D\Lambda(x)=DZ(\Lambda(x))^{-1}
\end{equation}
this yields that 
\begin{equation}\label{8p}
\alpha_1\leq \frac{|Z(x)|}{|DZ(x)|}\leq \frac{|Z(x)|}{\ell(DZ(x))}\leq \alpha_2
\quad\text{if} \ Z(x)\in\H_{\geq M}.
\end{equation}
We may assume here that $\alpha_1\leq 1\leq \alpha_2$.

The following lemma shows in particular that similar estimates hold for $H$ and~$G$.
Here $M$ is the constant from~\eqref{8p}.

\begin{lemma} \label{lemma1}
Let $0<\sigma<\tau<1$.
Then there exists $C_1,R>0$ with the following properties:

Let $F\in\{Z,H,G\}$, let $x\in\R^3$ and
let $0<r\leq \sigma |F(x)|$ such that $F(x)\in \H_{\geq R}$ and
\begin{equation}\label{8p1}
B(F(x),r)\subset\H_{\geq M}.
\end{equation}
Let $U$ be the component of $F^{-1}(B(F(x),r))$ containing~$x$.
If $F=G$, suppose in addition that $|F(x)|\geq 3|x|$. Then 
\begin{equation}\label{8q}
\esssup_{u\in U} \max \{|F(u)|,|DF(u)|\}
\leq C_1 \essinf_{u\in U} \min \{|F(u)|,\ell(DF(u))\}
\end{equation}
and
\begin{equation}\label{8r}
\diam U\leq 2C_1 \frac{r}{|F(x)|}.
\end{equation}
The condition~\eqref{8p1} is satisfied in particular if $F(x)_3\geq \tau|F(x)|$.
\end{lemma}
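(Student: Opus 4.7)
The plan is to prove each of the three cases $F=Z$, $F=H$, $F=G$ in turn, using \eqref{8n}--\eqref{8p} as the engine. The final assertion of the lemma is an elementary geometric observation: if $F(x)_3\geq \tau|F(x)|$ and $y\in B(F(x),r)$ with $r\leq\sigma|F(x)|$, then $y_3\geq F(x)_3-r\geq(\tau-\sigma)|F(x)|\geq(\tau-\sigma)R$, which exceeds $M$ once $R$ is chosen large enough depending on $\tau-\sigma$.

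For $F=Z$, since $Z$ is injective on each beam $T(s)$ and $B(F(x),r)\subset\H_{\geq M}$, the component $U$ equals $\Lambda(B(F(x),r))$ for the branch $\Lambda$ of $Z^{-1}$ with $\Lambda(F(x))=x$. The hypothesis $r\leq\sigma|F(x)|$ pins $|y|$ to $[(1-\sigma)|F(x)|,(1+\sigma)|F(x)|]$ on this ball, so \eqref{8n} gives $|D\Lambda(y)|\leq\alpha_2/((1-\sigma)|F(x)|)$. Integrating $D\Lambda$ along the segment from $F(x)$ to an arbitrary $y\in B(F(x),r)$ produces \eqref{8r}, while \eqref{8q} follows from \eqref{8p} applied pointwise on $U$, with $C_1$ depending only on $\sigma$, $\alpha_1$, $\alpha_2$.

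For $F=H=Z\circ\varphi$, the map $\varphi$ from \eqref{1c2} is a bi-Lipschitz quasiconformal homeomorphism of $\R^3$ with absolute constants $\kappa_1\leq\ell(D\varphi)\leq|D\varphi|\leq\kappa_2$ almost everywhere. The component $U$ of $H^{-1}(B(F(x),r))$ containing $x$ is $\varphi^{-1}$ applied to the corresponding $Z$-component based at $\varphi(x)$. The chain rule $DH=(DZ\circ\varphi)\,D\varphi$ together with the Lipschitz constant $1/\kappa_1$ of $\varphi^{-1}$ then transfer \eqref{8q} and \eqref{8r} from the $Z$-case, at the cost of a universal constant factor.

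The main obstacle is the case $F=G$. First, \eqref{1c1} combined with $|G(x)|\geq 3|x|$ forces $|G(x)|\leq 3C/2$ whenever $x_3<L$; choosing $R>3C/2$ thus guarantees $x\in\H_{\geq L}$, so that $G=\id+Z$ near $x$. A closer inspection, using that $|Z(x)|$ is comparable to $e^{x_3}$ coming from \eqref{1b}, shows that $|G(x)|\geq R$ in fact forces $x_3-L$ to be as large as one pleases by enlarging $R$; this leaves room for a bootstrap keeping $U\subset\H_{\geq L}$ once the diameter bound $\diam U\leq 2C_1\sigma$ is in hand. On $U$, the identity $DG=I+DZ$ together with \eqref{8p} (which makes $|DZ(u)|$ large of order $|Z(u)|$) yields $|DG(u)|\asymp|DZ(u)|$ and $\ell(DG(u))\asymp\ell(DZ(u))$, both comparable to $|Z(u)|\asymp|G(u)|$; this gives \eqref{8q}. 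For \eqref{8r} I would use the injectivity of $G$ on $T(s)\cap\H_{\geq R}$ from \eqref{1d1} to set up the branch inverse $\Psi$ of $G$, note that $D\Psi=(I+DZ\circ\Psi)^{-1}$ has norm at most $C'/|G(x)|$ on $B(G(x),r)$, and integrate along segments as in the $Z$-case. The delicate step is the bootstrap that traps $U$ inside $\H_{\geq L}$ and the need to keep every comparability uniform as $x$ varies; once those are installed, the $G$-case closely mirrors the $Z$-case.
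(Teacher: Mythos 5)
Your proposal is correct and follows essentially the same route as the paper: the $Z$-case rests on the pinning $(1-\sigma)|Z(x)|\leq|Z(u)|\leq(1+\sigma)|Z(x)|$ and the inverse-derivative bounds \eqref{8n}--\eqref{8p}, the $H$-case is the $Z$-case plus the chain rule and the constant dilatation of $\varphi$, and the $G$-case uses $G=\id+Z$ on $U$ (forced by taking $R$ large) together with $|G(x)|\geq3|x|\Rightarrow|Z(x)|\geq2|x|$ to make $|G|\asymp|Z|$ and $DG\asymp DZ$. The only organizational difference is that you prove \eqref{8r} case-by-case via the inverse branch, whereas the paper derives it once from \eqref{8q} by lifting a radial segment and using $\essinf\ell(DF)\geq|F(x)|/C_1$; and you explicitly flag the bootstrap keeping $U\subset\H_{\geq L}$ for $F=G$, a point the paper subsumes under ``provided $R$ is sufficiently large'' --- both treatments leave that detail at the same level of rigor.
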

\begin{proof}
Suppose first that $F=Z$. 
For $u\in U$ we have $Z(u)\in\H_{\geq M}$ by~\eqref{8p1}
and 
\begin{equation}\label{8r1}
(1-\sigma)|Z(x)|\leq |Z(x)|-r\leq |Z(u)|\leq|Z(x)|+r\leq  (1+\sigma)|Z(x)|.
\end{equation}
Together with~\eqref{8p} we find that
\begin{equation}\label{8s}
\max \{|Z(u)|,|DZ(u)|\} \leq \frac{1}{\alpha_1}|Z(u)|\leq \frac{1+\sigma}{\alpha_1}|Z(x)|
\end{equation}
while
\begin{equation}\label{8t}
\min \{|Z(u)|,\ell(DZ(u))\} \geq \frac{1}{\alpha_2}|Z(u)|\geq \frac{1-\sigma}{\alpha_2}|Z(x)|
\end{equation}
if $DZ(u)$ exists. 
This yields~\eqref{8q} for $F=Z$ with $C_1=(1+\sigma)\alpha_2/((1-\sigma)\alpha_1)$.

It follows from this and the chain rule that~\eqref{8q} also holds for $F=H$, with
a larger constant~$C_1$, since 
$|D\varphi(x)|/\ell(D\varphi(x))$ is constant for all $x$ where $\varphi$ is 
differentiable. In fact, we have $|D\varphi(x)|/\ell(D\varphi(x))=(2+\sqrt{3})/(2-\sqrt{3})$
for all such~$x$.

To deal with the case that $F=G$ we note that 
by~\eqref{1c1} we have $G(u)=Z(u)+u$ for $u\in U$, 
provided $R$ is sufficiently large. Hence
\begin{equation}\label{8u}
\frac12 |DZ(u)|\leq |DZ(u)|-1\leq |DG(u)|\leq |DZ(u)|+1\leq 2|DZ(u)|
\end{equation}
and 
\begin{equation}\label{8v}
\frac12 \ell(DZ(u))\leq \ell(DZ(u))-1\leq \ell(DG(u))\leq \ell(D(u))+1\leq\ell(DZ(u)) 
\end{equation}
for $u\in U$.
By hypothesis, $|G(x)|\geq 3|x|$.
We deduce that $|Z(x)|=|G(x)+x|\geq 2|x|$
and hence $|Z(x)|/2\leq |G(x)|\leq 3|Z(x)|/2$.
It follows that
\begin{equation}\label{8w}
\frac{1-\sigma}{2}|Z(x)|
\leq (1-\sigma)|G(x)|\leq |G(u)|
\leq (1+\sigma)|G(x)|
\leq \frac32(1+\sigma)|Z(x)|
\end{equation}
for $u\in U$.
We deduce 
from the last three inequalities 
that~\eqref{8q} also holds for $F=G$ with some constant~$C_1$.

To prove~\eqref{8r} let $v\in\partial U$ with $|v-x|=\max_{u\in\partial U}|u-x|$.
Let $\Gamma$ be the straight line segment connecting $F(x)$ and $F(v)$
and let $\gamma$ be the curve in the closure of $U$ such that $\Gamma=F(\gamma)$.
It follows from~\eqref{8q} that
\begin{equation}\label{8x}
r = \length( \Gamma)
\geq \length(\gamma) \essinf_{u\in U} \ell(DF(u))
\geq |v-x| \frac{|F(x)|}{C_1}
\geq  \frac{\diam U}{2C_1} |F(x)|.
\end{equation}
Now~\eqref{8r} follows.

To prove that~\eqref{8p1} is satisfied if $F(x)_3\geq \tau|F(x)|$ we note that
if this is the case and $w\in B(F(x),r)$, then
$w_3\geq F(x)_3-r\geq (\tau-\sigma)|F(x)|\geq  (\tau-\sigma)R$.
Thus~\eqref{8p1} follows for large~$R$.
\end{proof}
We apply Lemma~\ref{lemma1} to the function $h_m$ defined by~\eqref{3b1}.
\begin{lemma} \label{lemma1a}
Let $0<\sigma<\tau<1$.
Then there exists $C_1,R>0$ with the following properties:

Let $m\in\N$, let $x\in\R^3$ and
let $U$ be the component of $h_m^{-1}(B(h_m(x),\sigma|h_m(x)|))$ containing~$x$.
Suppose that $h_m(x)_3\geq \tau|h_m(x)|$
and that $|h_j(x)|\geq 3|h_{j-1}(x)|$ and $h_j(x)\in\H_{\geq R}$ for $1\leq j\leq m$.
Then
\begin{equation}\label{8y}
\esssup_{u\in U} \max \left\{\prod_{j=1}^m |h_j(u)|,|Dh_m(u)|\right\}
\leq C_1^m \essinf_{u\in U} \min \left\{\prod_{j=1}^m |h_j(u)|,\ell(Dh_m(u))\right\}.
\end{equation}
\end{lemma}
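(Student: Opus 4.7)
The plan is to iterate Lemma~\ref{lemma1} downward, once per map $F_m,F_{m-1},\dots,F_1$. For $0\leq j\leq m$ I would let $U_j$ be the component of $(F_m\circ \cdots\circ F_{j+1})^{-1}(B(h_m(x),\sigma|h_m(x)|))$ containing $h_j(x)$, so that $U_0=U$, $U_m$ is the original ball, and $U_{j-1}$ is the component of $F_j^{-1}(U_j)$ containing $h_{j-1}(x)$. This nesting ensures that $h_k(u)\in U_k$ for every $u\in U$ and every $k$.

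The main step, and the main obstacle, is to show inductively that $U_j\subset B(h_j(x),\rho_j)$ with $\rho_j$ small enough for Lemma~\ref{lemma1} to apply to $F_j$ at $h_{j-1}(x)$. I would start from $\rho_m=\sigma|h_m(x)|$; the last sentence of Lemma~\ref{lemma1}, together with $h_m(x)_3\geq\tau|h_m(x)|$ and $\tau>\sigma$, gives $B(h_m(x),\rho_m)\subset\H_{\geq M}$. At each inductive step I would apply Lemma~\ref{lemma1} to $F_j$ at $h_{j-1}(x)$; the extra hypothesis needed when $F_j=G$, namely $|F_j(h_{j-1}(x))|\geq 3|h_{j-1}(x)|$, is exactly the assumption $|h_j(x)|\geq 3|h_{j-1}(x)|$ of Lemma~\ref{lemma1a}. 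The diameter bound \eqref{8r} then yields $\diam U_{j-1}\leq 2C_1\rho_j/|h_j(x)|$, and I would set $\rho_{j-1}=2C_1\rho_j/|h_j(x)|$. Because $|h_k(x)|\geq R$ for every $k$, this recursion produces $\rho_j\leq 2C_1\sigma$ uniformly in $j$, and for $R$ sufficiently large (depending only on $C_1$, $\sigma$, $M$) both $\rho_j\leq\sigma|h_j(x)|$ and $B(h_j(x),\rho_j)\subset\H_{\geq M}$, the latter via $h_j(x)_3\geq R\geq M+\rho_j$, will hold throughout the induction.

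With the inductive framework in place, Lemma~\ref{lemma1} delivers at each level the four cross-bounds $\max\{|F_j(u')|,|DF_j(u')|\}\leq C_1\min\{|F_j(v')|,\ell(DF_j(v'))\}$ for essentially every $u',v'\in U_{j-1}$. For $u,v\in U$ we have $h_{j-1}(u),h_{j-1}(v)\in U_{j-1}$ for every $j$; using the chain rule together with $|AB|\leq|A|\,|B|$ and $\ell(AB)\geq\ell(A)\ell(B)$, and identifying $|h_j(u)|$ with $|F_j(h_{j-1}(u))|$, I would multiply the four per-level bounds over $j=1,\dots,m$ to conclude that each of $\prod_{j=1}^m|h_j(u)|$ and $|Dh_m(u)|$ is at most $C_1^m$ times each of $\prod_{j=1}^m|h_j(v)|$ and $\ell(Dh_m(v))$. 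Taking $\esssup_{u\in U}$ on the left and $\essinf_{v\in U}$ on the right then yields \eqref{8y}.
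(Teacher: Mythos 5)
Your proposal is correct and follows essentially the same route as the paper: apply Lemma~\ref{lemma1} at each level, prove inductively (from $j=m$ downward) that $h_j(U)$ stays inside a ball of uniformly bounded radius around $h_j(x)$ contained in $\H_{\geq M}$ with radius at most $\sigma|h_j(x)|$, and then multiply the resulting per-level bounds via the chain rule. The only cosmetic difference is that you introduce the explicit radius recursion $\rho_{j-1}=2C_1\rho_j/|h_j(x)|$, whereas the paper simply observes that $\diam h_j(U)\leq 2C_1\sigma$ persists down the induction once $R$ is large.
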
 
\begin{proof} 
The chain rule implies that 
\begin{equation}\label{8z}
\prod_{j=1}^m \ell(DF_j(h_{j-1}(u)))
\leq 
\ell(Dh_m(u))
\leq |Dh_m(u)|\leq \prod_{j=1}^m |DF_j(h_{j-1}(u))| ,
\end{equation}
provided the derivatives exist.
The conclusion follows if we show that 
\begin{equation}\label{9a}
\esssup_{w\in h_{j-1}(U)} \max \{|F_j(w)|,|DF_j(w)|\}
\leq C_1 \essinf_{w\in h_{j-1}(U)} \min \{|F_j(w)|,\ell(DF_j(w))\}
\end{equation}
for $1\leq j\leq m$.
For $j=m$ this follows from~\eqref{8q}, with $F=F_m$ and $r=\sigma|F_m(x)|$.
For $1\leq j\leq m-1$ it would also follow from~\eqref{8q} if
we show that there exists $r\in (0,\sigma |h_j(x)|)$ such that
\begin{equation}\label{9b}
h_j(U)\subset B(h_j(x),r)\subset \H_{\geq M}.
\end{equation}
By~\eqref{8r} we have $h_{m-1}(U)\subset B(h_{m-1}(x),r)$ with $r=\diam h_{m-1}(U)\leq 2C_1\sigma$.
Thus $B(h_{m-1}(x),r)\subset\H_{\geq M}$
if $R$ is sufficiently large. Hence~\eqref{9b} holds for $j=m-1$.
Inductively we see 
that~\eqref{9b} and hence~\eqref{9a} also hold for $1\leq j\leq m-2$.
\end{proof} 

\section{Proof of Theorem~\ref{theorem_dimA}} \label{lower_bound}
\subsection{The upper bound for the dimension} \label{upper_bound}
We may assume that $\rho<3$ since otherwise there is nothing to prove.
Fix a (large) number $R$ and let $\cA_R(f)$ be the set of all $x\in T\cap\H_{\geq R}$
such that $h_m(x)\in\H_{\geq R}$,
$|h_{m}(x)| \geq \exp^{m}(R)$
and $|h_m(x)|\geq 3 |h_{m-1}(x)|$
for all $m\in\N$, 
as well as $h_m(x)\in T$ for all even~$m$.
It follows from~\eqref{8i} and~\eqref{8j} that 
$\cA(f)=\bigcup_{n=0}^\infty f^{-n}(\cA_R(f))$.
Since $f$ is locally bi-Lipschitz, it suffices to show that
$\dim \cA_R(f)\leq \rho$ for all~$L$.

We will do so using Lemma~\ref{lemma10}.
So let $x\in \cA_R(f)$.
It follows from~\eqref{1c5} that if $m$ is even, then $h_m(x)$ is contained in 
the domain $\Omega$ defined by~\eqref{1c4}.
This implies that 
\begin{equation}\label{6a}
|h_m(x)|\leq 2 h_m(x)_3.
\end{equation}
Choose  $t>0$ large enough  and put 
\begin{equation}\label{6a1}
K_m=B\!\left(h_m(x),\frac14 |h_m(x)|\right)
\end{equation}
and let $U_m(x)$ be the component of $h_m^{-1}(K_m)$ that contains~$x$.
We now define $r_m(x)=\dist(x,\partial U_m(x))$ so that $B(x,r_m(x))\subset U_m(x)$.
Let $y_m\in\partial U_m(x)$ with $|y_m-x|=r_m(x)$ and let
$\gamma$ be the straight line segment connecting $x$ and $y_m$.
Then $h_m(\gamma)$ connects $h_m(x)$ with $\partial K_m$.
Lemma~\ref{lemma1a} yields that
\begin{equation}\label{6b}
\frac14 |h_m(x)| \leq \length (h_m(\gamma) )
\leq \length(\gamma) \esssup_{y\in U_m(x)} |Dh_{m}(y)|
\leq r_m(x) C_1^{m} \prod_{j=1}^{m} |h_j(x)|.
\end{equation}
Thus 
\begin{equation}\label{6c}
r_m(x)\geq \frac{C_1^{-m}}{4 \prod_{j=1}^{m-1} |h_j(x)|}.
\end{equation}

If $s=(s_1,s_2)\in S$ and if $T(s)$ intersects $K_m$, then
\begin{equation}\label{6d}
|(h_m(x)_1,h_m(x)_2)-(s_1,s_2)|\leq \frac14 |h_m(x)| +\sqrt{2}\leq |h_m(x)|,
\end{equation}
provided $R$ is chosen sufficiently large.
Suppose that $\rho>1$. Then 
$0<\beta<1$ by~\eqref{def-beta} and 
it follows from~\eqref{7a0} and~\eqref{6d} that
\begin{equation}\label{6e}
\card\!\left\{s\in S\colon T(s)\cap K_m)\neq\emptyset \right\}
\leq \alpha_0 |h_m(x)|^{2-2\beta} .
\end{equation}
But if $\rho=1$ so that $\beta=1$,
then $S=\{(0,0)\}$ and hence $\card S=1$. Thus~\eqref{6e} holds
trivially also in this case, assuming that $\alpha_0\geq 1$.

For each $s$ we can cover $T(s)\cap K_m$  by $M$ cubes of 
sidelength $2$, where 
\begin{equation}\label{6f}
M\leq \frac12 |h_m(x)| +1 \leq |h_m(x)|.
\end{equation}
It follows from the last two inequalities that $T\cap K_m$
can be covered by $N_m(x)$ cubes  of sidelength $2$, where 
\begin{equation}\label{6g}
N_m(x)\leq |h_m(x)|^{3-2\beta} =|h_m(x)|^\rho .
\end{equation}
Let $W$ be the image of such a cube under the inverse 
$\varphi\colon K_m \to U_m(x)$ of the map $h_m\colon U_m(x)\to K_m$.
As these cubes have diameter~$2\sqrt{3}$, Lemma~\ref{lemma1a} yields that
\begin{equation}\label{6h}
\diam W\leq d_m(x) 
\end{equation}
where
\begin{equation}\label{6i}
d_m(x) 
:= 2\sqrt{3} \esssup_{y\in K_m} |D\varphi(y)|
= \frac{2\sqrt{3}}{\essinf\limits_{z\in U_m(x)}\ell(Dh_{m}(z))}
\leq  \frac{2\sqrt{3}C_1^{m}}{\prod_{j=1}^{m} |h_j(x)|}.
\end{equation}
To summarize we see that $B(x,r_m(x))\cap \cA_R(f)$ can be covered by $N_m(x)$ sets of 
diameter at most $d_m(x)$, where $r_m(x)$, $N_m(x)$ and $d_m(x)$ satisfy~\eqref{6c},
\eqref{6g} and~\eqref{6i}, respectively.
In order to apply Lemma~\ref{lemma10}, let $\varepsilon>0$.
We deduce that
\begin{equation}\label{6j}
\begin{aligned}
\frac{N_m(x)d_m(x)^{\rho +\varepsilon}}{r_m(x)^{3}}
&\leq 
|h_m(x)|^\rho 
\left(  \frac{2\sqrt{3}C_1^{m}}{\prod_{j=1}^{m} |h_j(x)|}\right)^{\rho+\varepsilon}
\left( \frac{4 \prod_{j=1}^{m-1} |h_j(x)|}{C_1^{-m}}\right)^{3}
\\  &
= 4^3\cdot  \left(2\sqrt{3}\right)^{\rho+\varepsilon} C_1^{(3+\rho+\varepsilon)m}
|h_m(x)|^{-\varepsilon} \left(\prod_{j=1}^{m-1} |h_j(x)|\right)^{\! 3-\rho-\varepsilon} .
\end{aligned}
\end{equation}
It follows from~\eqref{8m} and~\eqref{eq8m} that the right hand side is less than~$1$ for large~$m$.
Lemma~\ref{lemma10} now yields that $\dim \cA_R(f)\leq \rho+\varepsilon$. Since 
$\varepsilon$ can be taken arbitrarily small, we conclude that $\dim \cA_R(f)\leq \rho$
and hence that $\dim\cA(f)\leq\rho$.

\subsection{Nested sets} \label{nested}
To estimate $\dim \cA(f)$ from below we will construct a subset of $\cA(f)$ as follows.
For $m\in \N$ we construct a collection ${\mathcal E}_m$ of disjoint compact
subsets of $\R^3$ with the property that every element of ${\mathcal E}_{m+1}$ is
contained in a unique element of ${\mathcal E}_m$. Conversely, every element
of ${\mathcal E}_m$ contains at least one element of ${\mathcal E}_{m+1}$.  Put
\begin{equation}\label{3a}
E_m=\bigcup_{V\in\cE_m} V
\quad\text{and}\quad
E=\bigcap^\infty_{m=1} E_m. 
\end{equation}
The construction will me made in such a way that $E\subset \cA(f)$.

McMullen~\cite{McMullen1987} used Lemma~\ref{lemma-frostman} to give a lower bound 
for the dimension of a set $E$ constructed this way in terms of 
$\sup_{V\in\cE_m}\diam V$ and $\inf_{V\in\cE_m}\dens(E_{m+1},V)$.
Adapting some ideas from~\cite{BKS},
we will instead work with bounds for $\diam V$ and $\dens(E_{m+1},V)$
which depend not only on~$m$, but also on~$V$.

We will choose the sets ${\mathcal E}_m$ such that $h_m$ maps the elements
of $\cE_m$ bijectively onto a closed ball of the form 
$K(t)=\overline{B}((0,0,t),t/2)$ for some large~$t$, say $t\geq R$.
Various conditions imposed later will require that $R$ is sufficiently large.
We will also consider the box
\begin{equation}\label{3c2}
Q(t)=\left\{x\in K(t)\colon \frac{t}{6}\leq x_1\leq \frac{t}{4},\;
\frac{t}{6}\leq x_2\leq \frac{t}{4},\;
|x_3-t|\leq \frac{t}{4}\right\} .
\end{equation}
Note that $Q(t)\subset K(t)$.
The advantage of considering $Q(t)$ will be that points in $Q(t)$ have a definite
distance from $\partial K(t)$ and that two beams $T(s)$ and $T(s')$ intersecting 
$Q(t)$ are a certain distance apart.

Put $\eta(r)=\log\log r$ and, for $ 2\leq k\in\N$ and $s\in S$, 
\begin{equation}\label{3d}
p(s,k)=(2s_1,2s_2,k\eta(k)).
\end{equation}

Let $P_H(s,k)$ be the component of $H^{-1}(K(H(p(s,k))_3))$ containing $p(s,k)$.
Here $H(p(s,k))_3$ denotes the third component of $H(p(s,k))$.
Note that since
$\varphi(p(s,k))=(2s_1,2s_2,k\eta(k)-2|s_1|-2|s_2|)$
we have 
$H(p(s,k))=(0,0,\exp(k\eta(k)-2|s_1|-2|s_2|))$ 
so that $H(p(s,k))_3=\exp(k\eta(k)-2|s_1|-2|s_2|))$.
Note that $P_H(s,k)\cap P_H(s',k')=\emptyset$ if $(s,k)\neq (s',k')$, assuming that
$k$ and $k'$ are large enough.

For large $t$ we consider the set $\cU_H(t)$ 
of all $P_H(s,k)$ which are contained in $Q(t)$.
Note that if $P_H(s,k)\in \cU_H(t)$, then, in particular, $p(s,k)\in Q(t)$.
This yields that  $2|s_i|\leq t/4$ for $i\in\{1,2\}$ while $k\eta(k)\geq 3t/4$.
We deduce that 
\begin{equation}\label{3d0}
k\eta(k)\geq 3(|s_1|+|s_2|).
\end{equation}
This yields  that $k\eta(k)-2|s_1|-2|s_2|\geq k\eta(k)/3$
and $|p(s,k)|\leq 5k\eta(k)/3$.
Thus
\begin{equation}\label{3d1}
\begin{aligned}
|H(p(s,k))|
&=\exp(k\eta(k)-2|s_1|-2|s_2|)
\\ &
\geq \exp\!\left( \frac13 k\eta(k)\right)\geq \exp\!\left(\frac15 |p(s,k)|\right).
\end{aligned}
\end{equation}

Analogous definitions are made with $G$ instead of~$H$. Thus we define $P_G(s,k)$
as the component of $G^{-1}(K(G(p(s,k))_3))$ that contains $p(s,k)$.
Note here that is not difficult to see that $G(p(s,k))\in K(G(p(s,k))_3)$
if $k$ is sufficiently large; see Remark~\ref{remark1} below.
Again the $P_G(s,k)$ are pairwise disjoint.
Also,  we define
$\cU_G(t)$ as the set of all $P_G(s,k)$ which are contained in $Q(t)$.

Since $G(p(s,k))_3=k\eta(k)+\exp(k\eta(k))$ we have 
\begin{equation}\label{3d2}
|G(p(s,k))|\geq \exp(k\eta(k))\geq \exp\!\left(\frac35 |p(s,k)|\right).
\end{equation}

It follows from the above definitions 
that if $F\in\{G,H\}$ and $P\in \cU_F(t)$, then $F(P)$ is a ball and
$F\colon P\to F(P)$ is bijective.

For large $t$ and $F\in\{G,H\}$ we put
\begin{equation}\label{3e1}
U_F(t)=\bigcup_{P\in \cU_H(t)} P.
\end{equation}

For some large  $R$ we put $K_0=K(R)$,
\begin{equation}\label{3e}
\cE_0=\{ K_0\}
\quad \text{and} \quad
\cE_1= \cU_H(R) .
\end{equation}
Suppose now that $m\in\N$ and that $\cE_m$ has been defined.
Let $V\in \cE_m$.
Then we have $h_m(V)=K(t_V)$ for some $t_V>R$ and $h_m\colon V\to K(t_V)$ is bijective.
Let $\varphi\colon K(t_V)\to V$ be the inverse of this map.
We put 
\begin{equation}\label{3f}
\cE_{m+1}(V)=
\left\{\varphi(P)\colon P\in \cU_{F_{m+1}}(t_V) \right\} 
=
\begin{cases}
\left\{\varphi(P)\colon P\in \cU_G(t_V) \right\} 
 & \text{if}\ m \ \text{is odd},\\
\left\{\varphi(P)\colon P\in \cU_H(t_V) \right\} 
 & \text{if}\ m \ \text{is even},
\end{cases}
\end{equation}
and
\begin{equation}\label{3g}
\cE_{m+1}=\bigcup_{V\in \cE_m} \cE_{m+1}(V).
\end{equation}

Following~\cite{McMullen1987} we construct a
probability measure $\mu$ supported on $E$.
In order to do so, we define  a sequence $(\mu_m)$ of 
probability measures inductively. Here $\mu_m$ is supported on $E_m$,
and the restriction of $\mu_m$ to an element of $\cE_m$ is
a rescaled Lebesgue measure.

First we define $\mu_0$ as the Lebesgue measure on $K_0$,
rescaled so that $\mu_0(K_0)=1$. 
Thus $\mu_0(A)=\meas(A\cap K_0)/\meas K_0$.
Let $m\geq 0$ and suppose that the measure $\mu_{m}$ has been defined.
To define the measure $\mu_{m+1}$ it suffices to specify $\mu_{m+1}(W)$
for $W \in \cE_{m+1}$.
For such $W$ there exists $V\in\cE_m$ with $W\subset V$.
We put
\begin{equation}\label{3g0}
\mu_{m+1}(W)=\frac{\mu_{m}(W)}{\dens(E_{m+1},V)}.
\end{equation}
Note that for $V\in\cE_m$ we have
\begin{equation}\label{3g1}
\begin{aligned}
\mu_{m+1}(V)
&=\sum_{W\in\cE_{m+1}(V)} \mu_{m+1}(W)
=\frac{1}{\dens(E_{m+1},V)}\sum_{W\in\cE_{m+1}(V)} \mu_{m}(W)
\\ &
=\frac{1}{\dens(E_{m+1},V)}\mu_m(V\cap E_{m+1})
=\mu_m(V).
\end{aligned}
\end{equation}
We conclude that 
\begin{equation}\label{3g2}
\mu_{n}(V) =\mu_m(V)
\quad\text{for}\ n\geq m,\; V\in\cE_m.
\end{equation}
Let $\mu$ be a weak limit of the sequence $(\mu_m)$. Then
$\mu$ is a probability measure supported on $E$ such that 
\begin{equation}\label{3g3}
\mu(V) =\mu_m(V)
\quad\text{for}\ V\in\cE_m.
\end{equation}

\subsection{Estimates for the sets in \texorpdfstring{$\cE_m$}{Em}}\label{estimates_Em}
As already mentioned, we will estimate $\diam V$ and $\dens(E_{m+1},V)$
for $V\in\cE_m$.
\begin{lemma} \label{lemma2}
There exist $C_2,C_3,R>0$ such that if $F\in\{G,H\}$, $t\geq R$
 and $P\in\cU_H(t)$, then
\begin{equation}\label{5a}
\diam P\leq C_2
\end{equation}
and
\begin{equation}\label{5a1}
\meas P\geq C_3.
\end{equation}
\end{lemma}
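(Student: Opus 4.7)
The strategy is to apply Lemma~\ref{lemma1} to $F$ at the base point $x := p(s,k)$. Since $P = P_F(s,k)$ is the component of $F^{-1}(K(F(x)_3))$ containing $x$, on which $F$ is a bijection onto the ball $K(F(x)_3) = \overline{B}((0,0,F(x)_3), F(x)_3/2)$, it is convenient to enclose $K(F(x)_3)$ in the Euclidean ball $B(F(x), r)$ of radius
\[
r := \tfrac{1}{2}F(x)_3 + \bigl|F(x) - (0,0,F(x)_3)\bigr|,
\]
so that $P$ is contained in the component $U$ of $F^{-1}(B(F(x), r))$ containing $x$. In the case $F = H$ one has $F(x) = (0,0,H(x)_3)$, so $r = H(x)_3/2$ exactly; in the case $F = G$ one has $F(x) - (0,0,G(x)_3) = (2s_1, 2s_2, 0)$, of size at most $t\sqrt{2}/4$, since $p(s,k) \in Q(t)$ forces $|s_i| \leq t/8$.

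The main technical point is then verifying the hypotheses of Lemma~\ref{lemma1} with, say, $\sigma = 3/5$ and $\tau = 4/5$. This rests on the explicit formulae
\[
H(p(s,k)) = (0,0,e^{k\eta(k)-2|s_1|-2|s_2|}), \qquad G(p(s,k)) = (2s_1, 2s_2, k\eta(k) + e^{k\eta(k)}),
\]
together with the inequalities $|s_1|+|s_2| \leq k\eta(k)/3$ and $k\eta(k) \geq 3t/4$ coming from $p(s,k) \in Q(t)$ (cf.~\eqref{3d0}). These force $F(x)_3/|F(x)| \to 1$ and, when $F = G$, $|G(x)|/|x| \geq e^{k\eta(k)}/|p(s,k)| \to \infty$ as $R \to \infty$, so that $r/|F(x)| \leq \sigma$, $F(x)_3 \geq \tau|F(x)|$ and $|G(x)| \geq 3|x|$ all hold for $R$ sufficiently large.

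With Lemma~\ref{lemma1} applicable, inequality~\eqref{8r} at once gives
\[
\diam P \leq \diam U \leq \frac{2C_1 r}{|F(x)|} \leq 2C_1\sigma =: C_2,
\]
which is~\eqref{5a}. For~\eqref{5a1} I use bijectivity of $F\colon P \to K(F(x)_3)$ and the change of variables formula:
\[
\frac{\pi}{6}\, F(x)_3^3 = \meas K(F(x)_3) = \int_P J_F(u)\,du \leq \esssup_{u\in P}|DF(u)|^3 \cdot \meas P .
\]
It remains to bound $|DF|$ on $P$ in terms of $F(x)_3$: inequality~\eqref{8p} gives $|DZ| \leq |Z|/\alpha_1$, and combining this with the chain rule for $H = Z\circ\varphi$ (using that $|D\varphi|$ is a bounded constant) and with~\eqref{8u} in the $G$-case yields $|DF(u)| \leq C'|F(u)|$ for some constant $C'$; together with $|F(u)| \leq \tfrac{3}{2}F(x)_3$ (since $F(u)\in K(F(x)_3)$) this delivers $\meas P \geq (\pi/6)/(3C'/2)^3 =: C_3$.

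The only real obstacle is the hypothesis check of Lemma~\ref{lemma1} for $F = G$, which requires that $e^{k\eta(k)}$ dominate both $k\eta(k)$ and $|s_1|+|s_2|$ sufficiently strongly; this is precisely what the defining conditions of $Q(t)$ deliver via~\eqref{3d0}.
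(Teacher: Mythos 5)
Your proof is correct and for the diameter bound \eqref{5a} it follows essentially the same route as the paper: verify the hypotheses of Lemma~\ref{lemma1} at the base point $p(s,k)$ (you use $\sigma=3/5$, $\tau=4/5$; the paper uses $\sigma=2/3$, $\tau=10/11$, but the choice is immaterial), note that $P$ is contained in the corresponding component $U$, and invoke \eqref{8r}. Your verification that $q=F(p(s,k))_3$ dominates $|(2s_1,2s_2,0)|$ when $F=G$ is the same content as the paper's \eqref{5a2}--\eqref{5a10}. Where you genuinely diverge is in the proof of \eqref{5a1}: the paper runs a path-length argument, showing via \eqref{5b}--\eqref{5b1} that $\dist(p(s,k),\partial P)\geq 4/(11C_1)$, so that an explicit ball around $p(s,k)$ lies inside $P$. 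You instead use the area formula for the injective quasiregular restriction $F\colon P\to K(F(x)_3)$, namely $\meas K(F(x)_3)=\int_P J_F\,dm\leq\esssup_P|DF|^3\cdot\meas P$, and then bound $|DF|$ from above by a constant times $F(x)_3$. Both approaches rest on the same derivative estimates from Lemma~\ref{lemma1}/\eqref{8p}; the paper's version also yields the slightly stronger geometric statement that $P$ contains a ball of uniform radius, which is not needed here but is in the spirit of later arguments, whereas yours is arguably the more routine computation. One small imprecision: with your choice of $r$ you only get $K(F(x)_3)\subset\overline{B}(F(x),r)$ rather than the open ball, so strictly you should replace $r$ by, say, $(1+\epsilon)r$ (or observe $\diam U=\diam\overline U$) before applying Lemma~\ref{lemma1}; this is cosmetic and does not affect the argument.
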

\begin{proof}
Let $P=P_F(s,k)$, with $s\in S$ and $k\in\N$. Put $q=F(p(s,k))_3$ and $a=(0,0,q)$.
Then $F(P)=B(a,q/2)$.

If $F=H$, then $F(p(s,k))=a$ and thus $|F(p(s,k))|=q$.
If $F=G$, then $F(p(s,k))=(2s_1,2s_2,q)$.
Using~\eqref{3d0} we can deduce that
\begin{equation}\label{5a2}
q=k\eta(k)+\exp(k\eta(k))\geq 20(|s_1|+|s_2|)
\end{equation}
if $R$ and hence $k$ are sufficiently large. Hence 
\begin{equation}\label{5a3}
\frac{9q}{10}\leq q-2(|s_1|+2|s_2|)\leq 
|F(p(s,k))|\leq q+2(|s_1|+|s_2|)\leq \frac{11q}{10}
\end{equation}
as well as 
\begin{equation}\label{5a10}
|F(p(s,k))-a|=|(2s_1,2s_2,0)|\leq \frac{q}{10}
\end{equation}
 for large~$k$.
It follows from the last two inequalities that
\begin{equation}\label{5a11}
F(P)=B\!\left(a,\frac{q}{2}\right)\subset B\!\left(F(p(s,k)),\frac{3q}{5}\right)
\subset B\!\left(F(p(s,k)),\frac23|F(p(s,k))|\right).
\end{equation}
Since $F(p(s,k))_3=q\geq 10|F(p(s,k))|/11$ by~\eqref{5a3} we may apply Lemma~\ref{lemma1} 
with $\tau=10/11$ and $\sigma=2/3$. We deduce from~\eqref{8r} that~\eqref{5a} follows
with $C_2=4C_1/3$.

It also follows from~\eqref{5a10} that
\begin{equation}\label{5b}
\dist(F(p(s,k)),\partial F(P))\geq \frac{2q}{5}.
\end{equation}

Let $\gamma$ be the line segment connecting $p(s,k)$ with the closest point on~$\partial P$.
Thus $\length(\gamma)=\dist(p(s,k),\partial P)$. 
Recalling that $F$ is injective on $P$ we find that
$F\circ\gamma$ is a curve connecting $F(p(s,k))$ with~$\partial F(P)$.
Together with Lemma~\ref{lemma1a} it follows that 
\begin{equation}\label{5b1}
\begin{aligned}
\frac{2q}{5}
&\leq \length( F\circ \gamma)
\leq \esssup_{x\in P} |DF(x)| \cdot \length(\gamma)
\\ &
\leq C_1|F(p(s,k))|\dist(p(s,k),\partial P).
\end{aligned}
\end{equation}
Since $|H(p(s,k))|=q$ we see that the estimate $|F(p(s,k))|\leq 11q/10$ 
given in \eqref{5a3} is valid not only for $F=G$ but trivially also holds for $F=H$.
Thus $\dist(p(s,k),\partial P)\geq 4/(11 C_1)$.
This yields~\eqref{5a1} with $C_3=4^4 \pi/(3\cdot (11C_1)^3)$.
\end{proof}

\begin{remark}\label{remark1}
It follows in particular from~\eqref{5a10} that $F(p(s,k))\in K(F(p(s,k))_3)$ if
$k$ is sufficiently large.
Of course this is clear if $F=H$, but~\eqref{5a10} shows that it also holds
for $F=G$ if $R$ is large enough.
\end{remark}

It follows from~\eqref{3d1}, \eqref{3d2} and~\eqref{5a} that if
$x\in P_F(s,k)\in \cU_F(t)$ for some $t\geq R$ and $F\in\{G,H\}$,
then 
\begin{equation}\label{5d1}
|F(x)|\geq \frac12 |F(p(s,k))|
\geq \exp\!\left(\frac15 |p(s,k)|\right)
\geq \exp\!\left(\frac15 |x|-C_2\right)
\geq 6 \exp\!\left(\frac16 |x|\right),
\end{equation}
provided $R$ and hence $k$ are sufficiently large.
It follows that 
\begin{equation}\label{5d2}
|h_m(x)|\geq 6\exp^m\!\left(\frac{1}{6}|x|\right)
\end{equation}
for $x\in E$, if $R$ is chosen sufficiently large.
Thus $E\subset\cA(f)$ by~\eqref{8j}.
\begin{lemma} \label{lemma3}
There exists $C_4,C_5,C_6,R>0$ such that if $F\in\{G,H\}$ and $t\geq R$, then
\begin{equation}\label{3h1}
\card \cU_F(t)
\geq C_4 \frac{t^{\rho}}{\eta(t)}
\end{equation}
and
\begin{equation}\label{3h2}
\dens\!\left(U_F(t),K(t)\right)
\geq C_5 \frac{t^{\rho-3}}{\eta(t)}.
\end{equation}
If  $x\in Q(t)$, then
\begin{equation}\label{3h3}
\card\{P\in  \cU_F(t)\colon P\cap B(x,\tau)\neq\emptyset\}
\leq
\begin{cases}
C_6(\tau +1) &\text{if}\ \tau\leq t^{\beta},\\
C_6\tau^\rho    &\text{if}\ \tau> t^{\beta},\\
\end{cases}
\end{equation}
with $\beta$ defined by~\eqref{def-beta}.
In particular 
\begin{equation}\label{eq3h3}
	\card \cU_F(t)\leq C_6(2t)^\rho.
\end{equation}
Moreover, if $P,P'\in \cU_F(t)$, then
\begin{equation}\label{3h4}
\dist(P,P')\geq\frac14 \eta(t).
\end{equation}
\end{lemma}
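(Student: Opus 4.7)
The proof is a counting argument based on the parametrization $P = P_F(s,k)$ with $s \in S$ and $k \in \N$. By Lemma~\ref{lemma2} each $P \in \cU_F(t)$ has $\diam P \leq C_2$, so up to a $C_2$-margin the condition $P \subset Q(t)$ reduces to $p(s,k) \in Q(t)$, equivalently $s_1, s_2 \in [t/12, t/8]$ (hence $|s| \asymp t$) and $k\eta(k) \in [3t/4, 5t/4]$ (hence $k \asymp t/\eta(t)$). I shall assume $\rho > 1$, so $\beta < 1$; the case $\rho=1$ is trivial since $\dim \cA(f) \geq 1$ is automatic.

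For the lower bound \eqref{3h1}, I count $s$ and $k$ separately. The derivative $(k\eta(k))' = \eta(k) + 1/\log k$ is of order $\eta(t)$ on the relevant range, so the number of admissible integers $k$ is $\gtrsim t/\eta(t)$. For the $s$-count, inscribe a ball $B$ of radius $\asymp t$ in the square $[t/12, t/8]^2$ and apply the lower inequality of \eqref{7a} to get $\card(S \cap B) \gtrsim t^{2-2\beta}$. Multiplying yields $\card \cU_F(t) \gtrsim t^{3-2\beta}/\eta(t) = t^\rho/\eta(t)$. The density bound \eqref{3h2} then follows at once from $\meas P \geq C_3$ (Lemma~\ref{lemma2}) together with $\meas K(t) \asymp t^3$.

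For the covering estimate \eqref{3h3}, $P_F(s,k) \cap B(x,\tau) \neq \emptyset$ forces $p(s,k) \in B(x, \tau + C_2)$, which splits into independent $2$-disk and interval constraints on $(s_1,s_2)$ and on $k$. The $k$-count is $\leq C(\tau/\eta(t) + 1)$. When $\tau \leq t^\beta$, inequality \eqref{7b} bounds the $s$-count by $324$, giving a product of order $\tau + 1$. When $\tau > t^\beta$, inequality \eqref{7a0} gives an $s$-count $\lesssim \tau^{2-2\beta}$, and the product is $\lesssim \tau^{3-2\beta}/\eta(t) + \tau^{2-2\beta} \lesssim \tau^\rho$, the second term being absorbed because $\tau > t^\beta$ is large. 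The particular case \eqref{eq3h3} follows from taking $\tau = 2t$ and any $x \in Q(t)$, since then $Q(t) \subset B(x, 2t)$.

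The separation \eqref{3h4} is the easiest step. For distinct pairs $(s,k), (s',k')$, if $s \neq s'$ then property $(a)$ of Lemma~\ref{lemma11} combined with $|s|, |s'| \gtrsim t$ yields $|p(s,k)-p(s',k')| \geq 2|s-s'| \gtrsim t^\beta$; while if $s = s'$ and $k \neq k'$, the mean value theorem applied to $k \mapsto k\eta(k)$ gives $|p(s,k)-p(s,k')| \gtrsim \eta(t)$. Since $t^\beta \gg \eta(t)$, in both cases the separation of centers exceeds $\eta(t)/2$ for large $t$, so subtracting the diameter bound $C_2$ yields $\dist(P, P') \geq \eta(t)/4$. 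The main technical point is organising the two-regime $s$-count in \eqref{3h3} so that the case split of Lemma~\ref{lemma12} matches up cleanly with the $k$-count; everything else is essentially book-keeping of the scales $t$, $t^\beta$, and $\eta(t)$.
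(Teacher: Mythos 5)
Your proposal is correct and follows essentially the same route as the paper: separate counts in $s$ (via Lemma~\ref{lemma12}) and in $k$ (via the scale of $k\mapsto k\eta(k)$), conversion to the density bound~\eqref{3h2} through the volume estimate of Lemma~\ref{lemma2}, the covering bound~\eqref{3h3} by intersecting the $s$- and $k$-constraints, and the separation~\eqref{3h4} from the gap in $k\eta(k)$ together with Lemma~\ref{lemma11}~(a). The only cosmetic differences are that you apply~\eqref{7a0} uniformly for all $\tau>t^\beta$ (where the paper further splits into $\tau<t/24$ and $\tau\geq t/24$ and uses~\eqref{7a}--\eqref{7a1}) and that you spell out the $s\neq s'$ case of~\eqref{3h4} which the paper leaves implicit.
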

\begin{proof}
Recall that  $B_2((x_1,x_2),r)$ denotes
the planar disk centred at $(x_1,x_2)$ and of radius $r$. Assuming that $R$ is large, it follows from~\eqref{5a} that 
if 
\begin{equation}\label{5e}
s
\in
 B_2\!\left( \frac15(t, t),\frac{t}{31}\right)
\subset
 B_2\!\left( \frac15(t, t),\frac{t}{30}-C_2\right)
\end{equation}
and 
\begin{equation}\label{5f}
\frac{3t}{4}+C_2\leq k\eta(k) \leq \frac{5t}{4}-C_2 ,
\end{equation}
then $P_F(s,k)\subset Q(t)$ and thus $P_F(s,k)\in \cU_F(t)$.
By~\eqref{7a}, 
the cardinality of the set of all $s\in S$ satisfying~\eqref{5e} 
is at least 
\begin{equation}\label{5g}
\frac{1}{64} \left(\frac{\sqrt{2}t}{5}\right)^{-2\beta} \frac{t^2}{31^2}
=\frac{2^{-\beta} }{64\cdot 5^{-2\beta} \cdot 31^2} \cdot t^{2-2\beta},
\end{equation}
provided $t$ is sufficiently large.

To estimate the cardinality of the set of all $k$ satisfying~\eqref{5f} we 
claim that~\eqref{5f} is satisfied if 
\begin{equation}\label{5g1}
\left(\frac{3t}{4}+C_2\right)\frac{1}{\eta(t)-1}
\leq k\leq
\left(\frac{5t}{4}-C_2\right)\frac{1}{\eta(t)}
\end{equation}
To see this note that~\eqref{5g1} implies that 
$\sqrt{t}\leq k\leq t$ and thus $\eta(t)-1\leq\eta(k)\leq \eta(t)$.
Using~\eqref{5a} we can now deduce~\eqref{5f} from~\eqref{5g1}.

The cardinality of the 
set of all $k\in\N$ which satisfy~\eqref{5g1} and hence also~\eqref{5f} is at least
\begin{equation}\label{5h}
\frac13 \frac{t}{\eta(t)},
\end{equation}
provided $t$ is sufficiently large.
Combining the bounds~\eqref{5g} and~\eqref{5h} we find that 
\begin{equation}\label{5i}
\card \cU_F(t)
\geq \frac{2^{-\beta} }{3\cdot 64\cdot 5^{-2\beta} \cdot 31^2} \frac{t^{3-2\beta}}{\eta(t)}
\end{equation}
for large $t$. 
Now~\eqref{3h1} follows from~\eqref{def-beta}.

Combining~\eqref{3h1} with~\eqref{5a1} we obtain~\eqref{3h2} with $C_5=C_3C_4/(6\pi)$.

To prove~\eqref{3h3} let $x\in Q(t)$. Arguments similar to the ones used
to obtain~\eqref{5e} and~\eqref{5f} show that if $P_F(s,k)\in \cU_F(t)$ with 
$P_F(s,k)\cap B(x,\tau)\neq\emptyset$, then
\begin{equation}\label{5j}
s\in 
B_2((x_1,x_2), \tau+C_2)
\end{equation}
and 
\begin{equation}\label{5k}
|k\eta(k)-x_3|\leq \tau+C_2.
\end{equation}
By the definition of~$Q(t)$ we have
\begin{equation}\label{5l}
\frac{t}{6}\leq |(x_1,x_2)|\leq |x| \leq 2t .
\end{equation}
Suppose first that $t^\beta\leq\tau\leq t/24$. 
Then, provided $R$ is large enough,  $\tau+C_2\leq 2\tau$ and hence
$B_2((x_1,x_2), \tau+C_2)\subset B_2((x_1,x_2), 2\tau)$.
Since $2\tau\leq t/12\leq 2|(x_1,x_2)|$ we can use~\eqref{7a} and~\eqref{7a1}
to bound the cardinality of $S\cap B_2((x_1,x_2),2\tau)$.
This is bigger than 
the cardinality $N_1$ of the set of all $s\in S$ satisfying~\eqref{5j}. Thus
\begin{equation}\label{5m}
N_1\leq 324 |(x_1,x_2)|^{-2\beta} (2\tau)^2
\leq 324\cdot 6^{2\beta}\cdot  4 t^{-2\beta}\tau^2 .
\end{equation}
Suppose next that $\tau\geq t/24$. Then 
$N_1\leq \alpha_0 (2\tau)^{2-2\beta}\leq
\alpha_0 2^{2-2\beta} 24^{2\beta} t^{-2\beta}\tau^2$ by~\eqref{7a0}.
Suppose finally that $\tau\leq t^\beta$.
Then $\tau+C_2\leq t^\beta+C_2\leq 6^\beta|x|^\beta+C_2\leq 8|x|^\beta$ for large $R$
by~\eqref{5l}. Now~\eqref{7b} yields that $N_1\leq 324$. 
Altogether it follows that there exists $C_6>0$ such that 
\begin{equation}\label{5n2}
4 N_1\leq 
\begin{cases}
 C_6 &\text{if}\ \tau\leq t^{\beta},\\
\displaystyle C_6 t^{-2\beta}\tau^2   &\text{if}\ t^{\beta}\leq\tau .\\
\end{cases}
\end{equation}

For $s\in S$ we now consider the cardinality $N_2(s)$ of the set of all 
$k$ such that $P_F(s,k)$ intersects $B(x,\tau)$.
Since 
\begin{equation}\label{5m1}
(k+1)\eta(k+1)-k\eta(k)\geq \eta(k)\geq \eta(t)-1\geq \frac12 \eta(t)
\end{equation}
if $P_F(s,k)\in \cU_F(t)$ we deduce from~\eqref{5a} that
\begin{equation}\label{5n}
N_2(s)\leq N_2:=\frac{4(\tau+C_2)}{\eta(t)}  +1. 
\end{equation}
Combining the above estimates we obtain 
\begin{equation}\label{5n1}
\card\{P\in  \cU_F(t)\colon P\cap B(x,\tau)\neq\emptyset\}
\leq N_1 N_2.
\end{equation}

Suppose that $\tau\leq t^\beta$. 
Noting that~\eqref{5n} yields that $N_2\leq \tau+2$ we obtain~\eqref{3h3}
from~\eqref{5n1} and~\eqref{5n2}.

Suppose next that $\tau> t^\beta$.
Since $Q(t)\subset K(t)\subset B(x,2t)$ we may assume that $\tau\leq 2t$.
By~\eqref{5n} we have $N_2\leq \tau$ for large $t$.
Together with~\eqref{5n1} and~\eqref{5n2} this yields that 
\begin{equation}\label{5n3}
\card\{P\in  \cU_F(t)\colon P\cap B(x,\tau)\neq\emptyset\}
\leq \frac14 C_6 t^{-2\beta} \tau^3
=\frac14 C_6 \left(\frac{\tau}{t}\right)^{2\beta} \tau^\rho\leq C_6 \tau^\rho.
\end{equation}
So we obtain~\eqref{3h3} also in this case.
Since, as already mentioned, $Q(t)\subset B(x,2t)$, this also yields~\eqref{eq3h3}.

Finally, \eqref{3h4} follows from~\eqref{5a} and~\eqref{5m1}, provided
$R$ is large enough.
\end{proof}

Let $x\in E$. Then $x\in E_m$ for all $m\in\N$ and thus there exists 
$V_m(x)\in\cE_m$ such that $x\in V_m(x)$.
Recall that by the definition of $\cE_m$ we have 
\begin{equation}\label{eq3h}h_m(V_m(x))=K(t_m),\end{equation} for some $t_m>R$ satisfying
$t_m\to\infty$ as $m\to\infty$.
It is easy to see that  $t_m/2\leq |h_m(x)|\leq 2 t_m$.
\begin{lemma} \label{lemma4}
There exists $C_7>0$ such that 
\begin{equation}\label{3i}
\dens(E_{m+1},V_m(x))\geq C_7^{m}  \frac{|h_m(x)|^{\rho-3}}{\eta(|h_m(x)|)}.
\end{equation}
\end{lemma}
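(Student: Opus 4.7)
The plan is to transfer the density bound on $U_{F_{m+1}}(t_m)$ inside $K(t_m)$ furnished by Lemma~\ref{lemma3} to a density bound on $E_{m+1}$ inside $V_m(x)$ by means of Lemma~\ref{lemma-dens}. By the recursive construction in~\S\ref{nested}, $h_m$ maps $V_m(x)$ bijectively onto $K(t_m)$, and $E_{m+1}\cap V_m(x)$ is precisely the image under the inverse map $\varphi:=(h_m|_{V_m(x)})^{-1}$ of the set $U_{F_{m+1}}(t_m)\subset K(t_m)$. Applying Lemma~\ref{lemma-dens} to $\varphi$ with $A=U_{F_{m+1}}(t_m)$ and $B=K(t_m)$, it will suffice to (i) invoke the lower bound~\eqref{3h2} for $\dens(U_{F_{m+1}}(t_m),K(t_m))$, and (ii) bound the linear distortion of $\varphi$ on $K(t_m)$, equivalently that of $h_m$ on $V_m(x)$.

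For step (ii) I would apply Lemma~\ref{lemma1a} at the point $y_m\in V_m(x)$ with $h_m(y_m)=(0,0,t_m)$, the centre of $K(t_m)$. At $y_m$ one has $h_m(y_m)_3=|h_m(y_m)|=t_m$, so the condition $h_m(y_m)_3\geq\tau|h_m(y_m)|$ holds for any $\tau<1$; the chain hypotheses $|h_j(y_m)|\geq 3|h_{j-1}(y_m)|$ and $h_j(y_m)\in\H_{\geq R}$ follow inductively from the definition of the $\cU_{F_j}(\cdot)$ together with~\eqref{5d1}, once $R$ is taken large enough. Choosing $\sigma=3/5$ and $\tau=4/5$ one has $K(t_m)\subset B(h_m(y_m),\sigma|h_m(y_m)|)$, and the connected set $V_m(x)$, being the component of $h_m^{-1}(K(t_m))$ containing $y_m$, lies in the component $U$ provided by Lemma~\ref{lemma1a}. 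The lemma then yields
\[
\esssup_{u\in V_m(x)}|Dh_m(u)|\leq C_1^m\essinf_{u\in V_m(x)}\ell(Dh_m(u)),
\]
and since $D\varphi(y)=Dh_m(\varphi(y))^{-1}$ this gives $\essinf_{K(t_m)}\ell(D\varphi)/\esssup_{K(t_m)}|D\varphi|\geq C_1^{-m}$.

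Combining the two ingredients, Lemma~\ref{lemma-dens} together with~\eqref{3h2} delivers
\[
\dens(E_{m+1},V_m(x))\geq C_1^{-3m}\dens(U_{F_{m+1}}(t_m),K(t_m))\geq C_1^{-3m}C_5\frac{t_m^{\rho-3}}{\eta(t_m)}.
\]
Since $t_m/2\leq |h_m(x)|\leq 2t_m$ and $\rho-3<0$, the factor $t_m^{\rho-3}$ is bounded below by $2^{\rho-3}|h_m(x)|^{\rho-3}$, and $\eta(t_m)\leq 2\eta(|h_m(x)|)$ for $|h_m(x)|$ large. Absorbing constants into the base of the exponential, the desired estimate follows with $C_7$ a small multiple of $C_1^{-3}$.

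I expect the main technical point to be verifying the hypotheses of Lemma~\ref{lemma1a} at the base point $y_m$, in particular the inclusion $V_m(x)\subset U$ and the inductive growth of $|h_j(y_m)|$. If that route is cumbersome, a cleaner alternative is to apply Lemma~\ref{lemma1} separately to each $F_j$ on the piece $h_{j-1}(V_m(x))\in\cU_{F_j}(t_{j-1})$---using the inclusion $F_j(P)\subset B(F_j(p(s,k)),(2/3)|F_j(p(s,k))|)$ and the lower bound $F_j(p(s,k))_3\geq(10/11)|F_j(p(s,k))|$ already established in the proof of Lemma~\ref{lemma2}---and then to multiply the resulting $m$ single-step distortion estimates through the chain rule to reach the same bound for $Dh_m$ on $V_m(x)$.
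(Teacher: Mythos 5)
Your proposal is correct and follows essentially the same route as the paper's proof, which likewise combines Lemma~\ref{lemma-dens}, the distortion estimate (via Lemma~\ref{lemma1a}), and~\eqref{3h2} to obtain $\dens(E_{m+1},V_m(x)) \geq C_1^{-3m}C_5\, t_m^{\rho-3}/\eta(t_m)$ and then passes from $t_m$ to $|h_m(x)|$. You provide somewhat more detail in verifying the hypotheses of Lemma~\ref{lemma1a} at the centre point $y_m$, which the paper leaves implicit.
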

\begin{proof}
With $h_m(V_m(x))=K(t_m)$
it follows from Lemma~\ref{lemma-dens}, Lemma~\ref{lemma1} and~\eqref{3h2} that
\begin{equation}\label{5o}
\dens(E_{m+1},V_m(x)) \geq C_1^{-3m} \dens(U_{F_{m+1}}(t_m),K(t_m)))
 \geq C_1^{-3m}  C_5 \frac{t_m^{\rho-3}}{\eta(t_m)}.
\end{equation}
The conclusion now follows since $t_m\leq 2|h_m(x)|$
and $\eta(2t_m)\leq 2\eta(t_m)$.
\end{proof}

\begin{lemma} \label{lemma5}
There exists $C_8>0$ such that if $W\in\cE_{m+1}(V_m(x))$, then
\begin{equation}\label{3k}
C_8^{-m}\leq \diam W \cdot \prod_{j=1}^{m}|h_j(x)| \leq C_8^m .
\end{equation}
\end{lemma}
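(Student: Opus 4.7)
The plan is to reduce the lemma to a distortion estimate for $h_m$ on $V_m(x)$ and then translate this into bounds on $\diam W$ using Lemma~\ref{lemma2}.

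The key distortion estimate I would prove is that for some absolute constant $C>1$,
\[
C^{-m}\prod_{j=1}^{m}|h_j(x)|\leq \essinf_{v\in V_m(x)}\ell(Dh_m(v))\leq \esssup_{v\in V_m(x)}|Dh_m(v)|\leq C^m\prod_{j=1}^{m}|h_j(x)|.
\]
The idea is to apply Lemma~\ref{lemma1} factor by factor to $F_j$ on the set $P_{j-1}:=h_{j-1}(V_m(x))$, which by the inductive construction of $\cE_m$ is contained in the unique element of $\cU_{F_j}(t_{j-1})$ containing the lattice point $p_{j-1}$ with $F_j(p_{j-1})\in K(t_j)$. For $F_j=H$ the ball $K(t_j)$ is exactly $B(F_j(p_{j-1}),t_j/2)$ and the hypotheses of Lemma~\ref{lemma1} are immediate. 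For $F_j=G$, the estimates~\eqref{5a3} and~\eqref{5a10} from the proof of Lemma~\ref{lemma2} show that $G(p_{j-1})$ is displaced from the centre of $K(t_j)$ by at most $t_j/10$ and that $G(p_{j-1})_3\geq\tfrac{10}{11}|G(p_{j-1})|$, so a ball of radius $\tfrac{3}{5}|G(p_{j-1})|$ about $G(p_{j-1})$ contains $K(t_j)$ and lies in $\H_{\geq M}$; the condition $|G(p_{j-1})|\geq 3|p_{j-1}|$ follows from~\eqref{3d1}--\eqref{3d2}. Combined with~\eqref{8p} and the fact that $|F_j(p_{j-1})|$, $t_j$ and $|h_j(x)|$ are all comparable (since $h_j(x)\in K(t_j)$), Lemma~\ref{lemma1} yields $|DF_j|\sim\ell(DF_j)\sim|h_j(x)|$ uniformly on $P_{j-1}$. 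Taking products via the chain rule gives the displayed bound.

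Given this estimate, the upper bound in~\eqref{3k} is obtained by joining any $a,b\in W$ by the $\varphi_m$-image of the straight segment in $K(t_m)$ from $h_m(a)$ to $h_m(b)$, where $\varphi_m$ is the inverse of $h_m|_{V_m(x)}$. Integrating $|D\varphi_m|=1/\ell(Dh_m)$ along this curve and using $\diam h_m(W)\leq C_2$ from Lemma~\ref{lemma2} gives
\[
\diam W\leq\frac{\diam h_m(W)}{\essinf_{V_m(x)}\ell(Dh_m)}\leq\frac{C_2\,C^m}{\prod_{j=1}^{m}|h_j(x)|}.
\]
For the lower bound, the Jacobian estimate $|J_{h_m}|\leq|Dh_m|^3\leq C^{3m}\prod_j|h_j(x)|^3$ a.e.\ on $V_m(x)$ yields $\meas h_m(W)\leq C^{3m}\prod_j|h_j(x)|^3\cdot\meas W$, so by $\meas h_m(W)\geq C_3$ (Lemma~\ref{lemma2}) we obtain $\meas W\geq C_3\,C^{-3m}\prod_j|h_j(x)|^{-3}$. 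Since $W$ lies in a ball of radius $\diam W$, we have $\meas W\leq\tfrac{4\pi}{3}(\diam W)^3$, yielding the matching lower bound on $\diam W$ after enlarging $C_8$ suitably.

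The main obstacle is the careful verification of the hypotheses of Lemma~\ref{lemma1} for $F_j=G$ at each inductive step, where $G(p_{j-1})$ is off-centre in $K(t_j)$; the necessary quantitative control comes from the bounds~\eqref{5a3} and~\eqref{5a10} already established in the proof of Lemma~\ref{lemma2}. The remainder of the argument is then a routine bi-Lipschitz manipulation of the inverse branch of $h_m$.
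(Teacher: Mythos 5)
Your proposal is correct and follows the same route as the paper: bound the distortion of $h_m$ on $V_m(x)$ in terms of $C^m\prod_{j=1}^m|h_j(x)|$, then convert the bounds $\diam P\leq C_2$ and $\meas P\geq C_3$ from Lemma~\ref{lemma2} into the two-sided estimate for $\diam W$, using the volume estimate $\meas W\leq \tfrac{4\pi}{3}(\diam W)^3$ for the lower bound. The only structural difference is that the paper simply invokes Lemma~\ref{lemma1a} to supply the distortion estimate, whereas you re-derive it factor by factor via Lemma~\ref{lemma1} applied at the lattice points $p_{j-1}$, using~\eqref{5a3}, \eqref{5a10} and~\eqref{5a11} to verify the hypotheses. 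Your inline verification is in fact a cleaner reading: $K(t_m)$ is centred on the $x_3$-axis rather than at $h_m(x)$, so the literal hypothesis of Lemma~\ref{lemma1a} (that $V_m(x)$ be a preimage component of a ball $B(h_m(x),\sigma|h_m(x)|)$ with $\sigma<1$) is not quite met, and the intended justification is precisely the one you spell out — centring the ball at $p_{j-1}$, where~\eqref{5a11} gives a valid $\sigma=2/3$. So the two arguments are the same in substance, with your write-up making explicit a geometric check that the paper leaves implicit.
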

In particular, it follows from Lemma~\ref{lemma5} that
\begin{equation}\label{3j}
\frac{C_8^{-m}}{\prod_{j=1}^{m}|h_j(x)|}\leq
\diam V_{m+1}(x) \leq  \frac{C_8^{m}}{\prod_{j=1}^{m}|h_j(x)|} .
\end{equation}
\begin{proof}[Proof of Lemma~\ref{lemma5}]
We have $h_m(V_m(x))=K(t_m)$. Let $\varphi\colon K(t_m)\to V_m(x)$ be the inverse function 
of $h_m\colon V_m(x)\to K(t_m)$. Each $W\in \cE_{m+1}(V_m(x))$ is of the form
$W=\varphi(P)$ for some $P\in\cU_{F_{m+1}}(t_m)$.
It follows that 
\begin{equation}\label{5p}
\diam W\leq \esssup_{y\in K(t_m)}|D\varphi(y)|\cdot \diam P
=\frac{\diam P}{\essinf\limits_{w\in V_m(x)}\ell(Dh_m(w))} .
\end{equation}
Lemma~\ref{lemma1a} and~\eqref{5a} now yield that
\begin{equation}\label{5u}
\diam W\leq  \frac{C_2C_1^m}{\prod_{j=1}^m |h_{j}(x)|},
\end{equation}
from which the right inequality in~\eqref{3k} follows
for a suitable $C_8>0$.

To prove the left inequality we proceed analogously, considering the measure of~$W$
instead of its diameter. We have
\begin{equation}\label{5p1}
\meas W\geq \essinf_{y\in K(t_m)}\ell(D\varphi(y))^3\cdot \meas P
=\frac{\meas P}{\esssup\limits_{w\in V_m(x)}|Dh_m(w)|^3} .
\end{equation}
Lemma~\ref{lemma1a} and~\eqref{5a1} now imply that
\begin{equation}\label{5u1}
\meas W\geq  \frac{C_3C_1^{3m}}{\prod_{j=1}^m |h_{j}(x)|^3}.
\end{equation}
Adjusting the value of $C_8$ if necessary we obtain the left inequality in~\eqref{3k}.
\end{proof}
\begin{lemma} \label{lemma6}
Let $W,W'\in \cE_{m+1}(V_{m}(x))$. Then
\begin{equation}\label{3l}
\dist(W,W')\geq \diam W.
\end{equation}
\end{lemma}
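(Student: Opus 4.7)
The plan is to lift the separation estimate $\dist(P,P')\geq \eta(t_m)/4$ provided by~\eqref{3h4} back through $h_m$, and compare it with the diameter bound from~\eqref{5u}. Write $W=\varphi(P)$ and $W'=\varphi(P')$, where $P,P'\in\cU_{F_{m+1}}(t_m)$ are the two distinct sets producing $W,W'$ and $\varphi\colon K(t_m)\to V_m(x)$ is the inverse of $h_m|_{V_m(x)}$. For any $y\in W$ and $y'\in W'$, the points $h_m(y)\in P$ and $h_m(y')\in P'$ satisfy $|h_m(y)-h_m(y')|\geq \eta(t_m)/4$.

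To turn this into a lower bound for $|y-y'|$, I would use the absolute continuity of $h_m$ along the line segment $[y,y']$ to write
\[
|h_m(y)-h_m(y')| \leq |y-y'| \cdot \esssup_{u \in [y,y']} |Dh_m(u)|,
\]
and bound the right-hand supremum via Lemma~\ref{lemma1a} by $C_1^m\prod_{j=1}^m|h_j(x)|$. Together with the estimate $\diam W \leq C_2 C_1^m/\prod_{j=1}^m|h_j(x)|$ from~\eqref{5u}, the desired inequality $\dist(W,W')\geq \diam W$ reduces to $\eta(t_m)\geq 4 C_2 C_1^{2m}$. This follows from the iteratively exponential size of $t_m$: by~\eqref{5d2} we have $t_m\geq |h_m(x)|/2 \geq 3\exp^m(R/6)$, so $\eta(t_m) = \log\log t_m$ dominates any fixed exponential in $m$ once $R$ is chosen sufficiently large, with finitely many small values of $m$ absorbed by enlarging $R$ further.

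The main obstacle is that the segment $[y,y']$ need not stay inside $V_m(x)$, the domain on which Lemma~\ref{lemma1a} furnishes the derivative bound. I would address this by applying Lemma~\ref{lemma1a} with a slightly enlarged parameter $\sigma$ to obtain a neighborhood $U\supset V_m(x)$ on which essentially the same bound on $|Dh_m|$ persists (up to a harmless change of constant). Since~\eqref{3j} gives $\diam V_m(x)\leq C_8^m/\prod_{j=1}^m|h_j(x)|$, the segment $[y,y']$ is exceedingly short and comfortably fits inside such a $U$, so that the chain-rule bound can be applied along it.
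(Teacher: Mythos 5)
Your plan correctly identifies the two ingredients — the separation \eqref{3h4} of the images $P,P'$ and the derivative bounds from Lemma~\ref{lemma1a} — but the step that is supposed to handle a segment $[y,y']$ leaving $V_m(x)$ does not work as written, and this is precisely the point where the paper's proof does something different. You assert that, after enlarging $\sigma$ slightly, the component $U$ of $h_m^{-1}(B(h_m(x),\sigma'|h_m(x)|))$ contains a collar around $V_m(x)$ wide enough to swallow the segment, because the segment is ``exceedingly short.'' Quantitatively this is not justified: enlarging the image ball from radius $t_m/2$ to $(1/2+\varepsilon)t_m$ yields a collar of Euclidean width roughly $\varepsilon t_m/\esssup_U |Dh_m|$, whereas $\diam V_m(x)$ can be as large as $t_m/\essinf_{V_m(x)}\ell(Dh_m)$. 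The ratio of these two quantities is controlled only by the distortion $\esssup|Dh_m|/\essinf\ell(Dh_m)\leq C_1^m$ from Lemma~\ref{lemma1a}, a factor that grows with $m$, so the collar may be far narrower than $\diam V_m(x)$ and the segment can escape $U$. Increasing $\sigma'$ more aggressively does not help, since the proof of Lemma~\ref{lemma1} degrades as the image ball approaches the origin.

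The paper's proof therefore splits into two cases. If the distance-realizing segment $\gamma$ lies in $V_m(x)$, the argument is essentially the one you give, using the derivative bound on $V_m(x)$ together with \eqref{3k} and \eqref{3h4}, and then \eqref{5d2} to make $\eta(t_m)$ beat $C_1^mC_8^m$. If $\gamma$ leaves $V_m(x)$, one takes the subarc $\gamma'$ from $W$ to $\partial V_m(x)$: since $h_m(W)=P\subset Q(t_m)$ and $\dist(Q(t_m),\partial K(t_m))\geq (2-\sqrt{3})t_m/4$, the image $h_m(\gamma')$ has length at least $(2-\sqrt{3})t_m/4$, and dividing by $\esssup|Dh_m|$ gives a lower bound for $\dist(W,W')\geq\length(\gamma')$ which again dominates $\diam W$ for $R$ large. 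You should incorporate this second case; as things stand, your argument covers only the first one and rests on an unsubstantiated geometric claim to dismiss the second.
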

\begin{proof}
Let $P=h_m(W)$ and $P'=h_m(W')$. With $h_m(V_m(x))=K(t_m)$ we then have 
$P,P'\in \cU_{F_{m+1}}(t_m)$. 
Let $\gamma$ be a straight line segment connecting $W$ and~$W'$
such that $\length(\gamma)=\dist(W,W')$.
First we assume that $\gamma\subset V_m(x)$.
It then follows from Lemma~\ref{lemma5} and the arguments in its proof
that
\begin{equation}\label{5v}
\begin{aligned}
\dist(P,P')
&\leq \length (h_m(\gamma)) \leq \dist(W,W') \esssup_{w\in V_{m}(x)} |Dh_m(w)|
\\ &
\leq \dist(W,W')  C_1^{m} \prod_{j=1}^{m}|h_j(x)|  
%\\ &
\leq C_1^{m} C_8^{m} \frac{\dist(W,W')}{\diam W} .
\end{aligned}
\end{equation}
Using~\eqref{3h4} we deduce that
\begin{equation}\label{5w}
\frac{ \dist(W,W')}{\diam W}  \geq  C_1^{-m} C_8^{-m}\frac12 \eta\!\left(\frac12 h_m(x)\right).
\end{equation}
It now follows from~\eqref{5d2} that if $R$ is chosen sufficiently large, 
then the right hand side of~\eqref{5w} is at least $1$ for all~$m$.

Suppose now that  $\gamma\not\subset V_m(x)$.
Then $\gamma$ contains a subsegment $\gamma'$ which connects $W$ with $\partial V_m(x)$.
Thus $h_m(\gamma')$ connects $P$ with $\partial K(t_m)$. Since $P\subset Q(t_m)$,
we have $\length(h_m(\gamma'))\geq (2-\sqrt{3})t_m/4$. Estimating the length
of $h_m(\gamma')$ 
as in~\eqref{5v} we obtain
\begin{equation}\label{5w1}
\frac{ \dist(W,W')}{\diam W}  \geq  C_1^{1-m} C_8^{1-m}\frac{(2-\sqrt{3})t_m}{4}
\end{equation}
instead of~\eqref{5w}. Again the conclusion follows if $R$ is chosen
sufficiently large.
\end{proof}
\subsection{Proof of the lower bound}
We may assume that $\rho>1$. 
In order to apply Lemma~\ref{lemma-frostman}, let $x\in E$ and $r>0$. 
For $m\in\N$ we put 
\begin{equation}\label{4a0}
d_m(x)=\diam V_m(x)
\quad\text{and}\quad
\Delta_m(x)=\dens(E_{m+1},V_m(x)).
\end{equation}
We choose $m\in\N$ such that 
\begin{equation}\label{4a}
d_{m+1}(x)\leq r< d_m(x).
\end{equation}

It follows from Lemma~\ref{lemma6} that
\begin{equation}\label{4b}
\mu(B(x,r))
=\mu(B(x,r)\cap V_m(x)) .
\end{equation}
Let 
\begin{equation}\label{4c}
\cY_m=\{W\in \cE_{m+1}(V_m(x))\colon W\cap B(x,r)\neq\emptyset\}.
\end{equation}
Then 
\begin{equation}\label{4d}
\mu(B(x,r))
\leq \sum_{W\in \cY_m} \mu(W)
= \sum_{W\in \cY_m} \mu_{m+1}(W)
=  \sum_{W\in \cY_m} \frac{1}{\prod_{j=0}^m \Delta_j(x)} \frac{\meas W}{\meas K_0} .
\end{equation}
Using Lemma~\ref{lemma4} and Lemma~\ref{lemma5}
we find that
\begin{equation}\label{4e}
\begin{aligned}
\mu(B(x,r))
&\leq 
\frac{1}{\meas K_0}
\card \cY_m \cdot  
\prod_{j=1}^m C_7^{-j} \frac{\eta(|h_j(x)|)}{|h_j(x)|^{\rho-3}}
\cdot \left( 2 \frac{C_8^m}{\prod_{j=1}^{m}|h_j(x)|}\right)^3
\\ &
= 
\frac{8 C_7^{-m(m+1)/2} C_8^{3m} }{\meas K_0}
\card \cY_m \cdot  
\prod_{j=1}^m\frac{\eta(|h_j(x)|)}{|h_j(x)|^{\rho}} .
\end{aligned}
\end{equation}
To estimate $\card\cY_m$ we note that 
\begin{equation}\label{4i}
h_m(B(x,r))\subset B\!\left(h_m(x), r \esssup_{w\in B(x,r)} |Dh_m(w)|\right).
\end{equation}
If $B(x,r)\subset  V_m(x)$ or, equivalently, $h_m(B(x,r))\subset K(t_m)$, 
with $t_m$ defined by~\eqref{eq3h},
then, by Lemma~\ref{lemma1a},
\begin{equation}\label{4j}
 r \esssup_{w\in B(x,r)} |Dh_m(w)| \leq \tau_m :=r C_1^m\prod_{j=1}^m |h_j(x)|.
\end{equation}
Since $h_m(x)\in Q(t_m)$, this is the case in particular if
$\tau_m\leq (2-\sqrt{3})t_m/4$. It follows that 
\begin{equation}\label{4g1}
\card\cY_m \leq 
\card\{P\in  \cU_H(t_m)\colon P\cap B(h_m(x),\tau_m)\neq\emptyset\}
\quad\text{if}\ \tau_m\leq\frac{(2-\sqrt{3})t_m}{4}.
\end{equation}

Let $0<\varepsilon<\rho-1$.
If $\tau_m\leq 1$, we deduce from~\eqref{3h3} that $\card \cY_m\leq 2C_6$. Since $|h_j(x)|\to\infty$ we have that $\eta(|h_j(x)|)\leq |h_j(x)|^{\varepsilon/2}$, for large $j$.
Together with~\eqref{4e}, \eqref{3j} and~\eqref{eq8m} we find that 
\begin{equation}\label{4g2}
\begin{aligned}
\mu(B(x,r))
&\leq 
\frac{16C_6 C_7^{-m(m+1)/2} C_8^{3m} }{\meas K_0}
\prod_{j=1}^m\frac{\eta(|h_j(x)|)}{|h_j(x)|^{\rho}} 
\\ &
\leq C_8^{-m(\rho-\varepsilon)} \prod_{j=1}^m\frac{1}{|h_j(x)|^{\rho-\varepsilon}} 
\leq d_{m+1}(x)^{\rho-\varepsilon}\leq r^{\rho-\varepsilon}
\end{aligned}
\end{equation}
for large $m$.

If $1<\tau_m\leq t_m^{\beta}$,
then~\eqref{3h3} yields that $\card \cY_m\leq 2C_6 \tau_m$.
Together with~\eqref{4e}, \eqref{4j}, \eqref{3j}  and~\eqref{eq8m} we deduce that
\begin{equation}\label{4g3}
\begin{aligned}
\mu(B(x,r))
&\leq 
\frac{16C_6 C_7^{-m(m+1)/2} C_8^{3m} C_1^m}{\meas K_0}
r \prod_{j=1}^m\frac{\eta(|h_j(x)|)}{|h_j(x)|^{\rho-1}} 
\\ &
\leq C_8^{-m(\rho-1-\varepsilon)} r \prod_{j=1}^m\frac{1}{|h_j(x)|^{\rho-1-\varepsilon}} 
%\\ &
\leq r d_{m+1}(x)^{\rho-1-\varepsilon}\leq r^{\rho-\varepsilon}
\end{aligned}
\end{equation}
for large $m$.

Suppose next that $t_m^{\beta}<\tau_m\leq (2-\sqrt{3})t_m/4$.
It then follows from~\eqref{3h3} that $\card \cY_m\leq C_6 \tau_m^\rho$.
Together with~\eqref{4e}, \eqref{4j}, \eqref{3j}  and~\eqref{eq8m} this yields that 
\begin{equation}\label{4g4}
\begin{aligned}
\mu(B(x,r))
&\leq 
\frac{8C_6 C_7^{-m(m+1)/2} C_8^{3m} C_1^{ms}}{\meas K_0} r^\rho \prod_{j=1}^m\eta(|h_j(x)|)
\\ &
\leq r^\rho d_{m+1}(x)^{-\varepsilon}\leq r^{\rho-\varepsilon}
\end{aligned}
\end{equation}
for large $m$.

Suppose finally that $\tau_m>(2-\sqrt{3})t_m/4\geq t_m/15$. It follows from \eqref{eq3h3} that  
 $\card \cY_m\leq \card\cU_H(t_m)\leq C_6(2t_m)^\rho\leq C_6 30^\rho\tau_m^\rho$.
As in~\eqref{4g4} we find that $\mu(B(x,r))\leq r^{\rho-\varepsilon}$ for large~$m$.

Thus in all cases we have $\mu(B(x,r))\leq r^{\rho-\varepsilon}$ if $m$ is 
sufficiently large, and hence if $r$ is sufficiently small.
Lemma~\ref{lemma-frostman} yields that $\dim E\geq \rho-\varepsilon$.
Since $E\subset \cA(f)$ and since $\varepsilon>0$ can be chosen arbitrarily small, 
we conclude that $\dim\cA(f)\geq \rho$.
This completes the proof of the lower bound for $\dim\cA(f)$ and hence the
proof of Theorem~\ref{theorem_dimA}.\qed

\begin{remark}\label{dimI}
The function $f$ considered by Rempe and Stallard~\cite{Rempe2010}
that satisfies $\dim\cI(f)=1$ behaves 
like $\exp\exp(z)$ in a half-strip and is bounded outside  this half-strip.
This suggests that in order to construct an example of a quasiregular map
for which $\dim\cI(f)$ is small one would look for a function $f$ which behaves
like $Z(Z(x))$ in a half-beam and which is bounded outside this half-beam.
We have been unable to construct such a function. 
	
However, the methods of~\cite[\S~4]{BD} indicate that such a function would 
have an invariant Cantor set of dimension greater than $d-1$. 
Arguments similar to those in~\cite{BP} now suggest that $\dim\cI(f)\geq d-1$ 
for such a function~$f$.
Perhaps we have $\dim\cI(f)\geq d-1$ and $\dim\cJ(f)\geq d-1$ 
for all quasiregular maps $f\colon\R^d\to\R^d$ of transcendental type.
\end{remark}

\section{Proof of Theorem \ref{theorem_dimJ}}\label{proof_theorem_dimJ}
We are going to need the following lemma which is implicit in the proof
of \cite[Theorem 1.2]{BFN}. We include a sketch of the proof here for  convenience.
\begin{lemma}\label{lemma5.1}
Let $f\colon\mathbb{R}^d\to\mathbb{R}^d$ be a quasiregular map and
$x_1\in \mathcal{A}(f)\setminus\mathcal{J}(f)$. Let $r>0$ be such that
$B(x_1,4r)\cap \mathcal{J}(f)=\emptyset$ and $x_2\in B(x_1,r)$. 
Then there is a constant $c>0$, depending only on $d$ and $r$, such that for all 
large enough $k\in\mathbb{N}$ we have 
\[
\log\frac{\log|f^k(x_2)|}{\log|f^k(x_1)|}\leq c\left(K_I(f)K_O(f)\right)^{k/(d-1)} .
\]
\end{lemma}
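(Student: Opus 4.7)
The strategy is to sandwich the modulus of a suitable image path family between a lower bound coming from Poletski's $K_I$-inequality and an upper bound of Teichm\"uller type, the latter exploiting that $B(x_1,4r)$ lies in the complement of $\mathcal{J}(f)$.

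I begin by extracting a fixed obstruction set. Since $\overline{B(x_1,2r)}$ is a compact subset of $\mathbb{R}^d\setminus\mathcal{J}(f)$, every one of its points has a neighborhood whose forward orbit under $f$ misses a set of positive capacity (by the definition of $\mathcal{J}(f)$). A finite covering argument combined with basic properties of capacity, as used in \cite{BN,BFN}, then produces a closed set $E\subset\mathbb{R}^d$ of positive capacity such that
\begin{equation}
E\cap f^k(B(x_1,4r))=\emptyset \quad\text{for every } k\geq 1.
\end{equation}

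For each $k$ I introduce the ring path family $\Gamma_k=\Delta(\overline{B(x_1,r)},\mathbb{R}^d\setminus B(x_1,2r);B(x_1,2r))$. Its modulus is bounded below by a constant $c_1=c_1(d,r)>0$. Applying Poletski's $K_I$-inequality to $f^k$ and using $K_I(f^k)\leq K_I(f)^k$, I obtain
\begin{equation}
M(f^k\Gamma_k)\geq c_1\,K_I(f)^{-k}.
\end{equation}
Every path in $f^k\Gamma_k$ is contained in $f^k(B(x_1,2r))$ and therefore avoids $E$. One endpoint lies in the connected compact set $F_1^{(k)}:=f^k(\overline{B(x_1,r)})$, which contains both $f^k(x_1)$ and $f^k(x_2)$; the other endpoint lies in $F_2^{(k)}:=f^k(\partial B(x_1,2r))$.

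The upper bound comes from a Teichm\"uller-type capacity estimate in $\mathbb{R}^d$: any family of paths in $\mathbb{R}^d\setminus E$ joining two continua $F_1,F_2$ has modulus at most $c_2/(\log\Phi)^{d-1}$, where $\Phi$ is an appropriate ratio of distances. Combined with a Mori-type distortion estimate for the $K_O$-quasiregular map $f^k$ (which controls the geometry of $F_2^{(k)}$ relative to $F_1^{(k)}$ and uses $K_O(f^k)\leq K_O(f)^k$), this yields, whenever $|f^k(x_2)|\geq|f^k(x_1)|$,
\begin{equation}
M(f^k\Gamma_k)\leq c_2\,\bigl(\log(|f^k(x_2)|/|f^k(x_1)|)\bigr)^{-(d-1)}.
\end{equation}
Combining the two modulus bounds gives $\log(|f^k(x_2)|/|f^k(x_1)|)\leq c_3\,(K_I(f)K_O(f))^{k/(d-1)}$. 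Since $\log|f^k(x_1)|\to\infty$, dividing by $\log|f^k(x_1)|$ and applying the elementary inequality $\log(1+u)\leq u$ for $u\geq 0$ produces the stated bound; the complementary case $|f^k(x_2)|<|f^k(x_1)|$ is vacuous because then the left-hand side is negative. The principal obstacle I expect is the last step: locating the correct form of the Teichm\"uller estimate in $\mathbb{R}^d$ and tracking both dilatations through the comparison of $F_1^{(k)}$ and $F_2^{(k)}$ in order to land the exponent $k/(d-1)$ on the product $K_I K_O$.
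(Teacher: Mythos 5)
Your overall strategy (sandwich a modulus between a Loewner--Poletski lower bound and a Teichm\"uller-type upper bound) is the right flavour, and the observation that forward iterates on $B(x_1,4r)$ omit a fixed set of positive capacity is exactly what the paper uses. However, there is a genuine gap in the geometry that breaks the upper bound, and the intermediate inequality you aim for is in fact too strong to be true.

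The problem is your choice of path family. You take $\Gamma_k=\Delta(\overline{B}(x_1,r),\mathbb{R}^d\setminus B(x_1,2r);B(x_1,2r))$ and want a Teichm\"uller estimate $M(f^k\Gamma_k)\leq c_2(\log\Phi)^{1-d}$ with $\Phi\sim R_2/R_1$, where $R_j=|f^k(x_j)|$. But $F_1^{(k)}=f^k(\overline{B}(x_1,r))$ is a \emph{single} continuum that already contains both $f^k(x_1)$ and $f^k(x_2)$, so $\diam F_1^{(k)}\geq R_2-R_1$; this set straddles the whole annulus $\{R_1\leq|y|\leq R_2\}$, and nothing separates $F_1^{(k)}$ from $F_2^{(k)}=f^k(\partial B(x_1,2r))$ by a ring of conformal modulus $\sim\log(R_2/R_1)$. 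Consequently the modulus $M(f^k\Gamma_k)$ need not be small, and no Teichm\"uller or Mori distortion argument will land the bound you want with $\Phi=R_2/R_1$.

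This matters, because the inequality you claim in the penultimate step, namely $\log(|f^k(x_2)|/|f^k(x_1)|)\leq c_3\,(K_IK_O)^{k/(d-1)}$, is strictly stronger than the lemma and is false in general. Since $x_1\in\mathcal{A}(f)$, $\log R_1$ grows roughly like an iterated exponential, and the double-log bound of the lemma only guarantees $\log R_2\leq(\log R_1)\exp\bigl(c(K_IK_O)^{k/(d-1)}\bigr)$; in the regime where $R_2$ is near its allowed maximum this makes $\log(R_2/R_1)$ vastly larger than $(K_IK_O)^{k/(d-1)}$. So the single-log intermediate step cannot be repaired; the double log must emerge directly from the modulus estimate.

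The paper circumvents exactly this by \emph{defining the condenser plates in the domain as level sets of $|f^k|$}: $X_1=\{x\in B(x_1,2r):|f^k(x)|\leq R_1\}$ and $X_2=\{x\in B(x_1,2r):|f^k(x)|\geq R_2\}$, so that the images genuinely separate into $\overline{B}(0,R_1)$ and $\mathbb{R}^d\setminus B(0,R_2)$. The nontrivial point is then the Loewner-type lower bound $M(\Delta(Y_1,Y_2;B(x_1,2r)))\geq c_1$, for which one must show the components $Y_1\ni x_1$, $Y_2\ni x_2$ reach $\partial B(x_1,2r)$; this uses the maximum principle for $Y_2$ and, for $Y_1$, a short topological argument involving openness of $f^k$ and the complete invariance of $\mathcal{J}(f)$ (choosing $R$ so that $B(0,R)$ meets the Julia set). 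These steps have no analogue in your proposal because your continua are just $\overline{B}(x_1,r)$ and $\partial B(x_1,2r)$, which is why the necessary hypothesis on the Julia set never gets used on that side of the argument.
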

\begin{proof}
Since $B(x_1,4r)\cap \mathcal{J}(f)=\emptyset$, the iterates $f^k$ of $f$ omit
a set of positive capacity in $B(x_1,r)$. Let $R>0$ be such that
$B(0,R)\cap \mathcal{J}(f)\not=\emptyset$. For all large enough $k$ we will
have that $R_1:=|f^k(x_1)|\geq R$. Put $R_2:=|f^k(x_2)|$. We may assume
that $R_2>R_1$ since otherwise there is nothing to prove.
	
For any such $k$ now consider the sets $X_1:=\{x\in B(x_1,2r)\colon |f^k(x)|\leq R_1\}$
and $X_2:=\{x\in B(x_1,2r)\colon |f^k(x)|\geq R_2\}$. Denote by $Y_j$ the component of
$X_j$ that contains $x_j$, for $j=1,2$. The maximum principle now implies that $Y_2$
connects $x_2$ to $\partial B(x_1,2r)$. Moreover, $Y_1$  connects $x_1$ to 
$\partial B(x_1,2r)$. Indeed, if this was not the case then  $Y_1$ would be
compactly contained in $B(x_1,2r)$ and thus it would be a connected component
of $(f^k)^{-1}\!\left(\overline{B}(0,R_1)\right)$. This would imply that
\[
B(0,R)\subset \overline{B}(0,R_1)=f^k\!\left(Y_1\right)\subset f^k\!\left(B(x_1,2r)\right).
\]
Since $B(0,R)\cap \mathcal{J}(f)\not= \emptyset$ by complete invariance of
the Julia set, this yields that $B(x_1,2r)\cap \mathcal{J}(f)\not=\emptyset$. 
This is a contradiction since $B(x_1,4r)\cap \mathcal{J}(f)=\emptyset$.
	
Let $\Gamma=\Delta\!\left(X_1,X_2;B(x_1,2r)\right)$ and
$\Gamma_1= \Delta\!\left(Y_1,Y_2;B(x_1,2r)\right)$. We now argue as in the proof
of \cite[Theorem 1.2]{BFN} in order to estimate the modulus $M(\Gamma)$ of the path
family $\Gamma$. We find that
\[
c_1\leq M\!\left(\Gamma_1\right) \leq M(\Gamma)
\leq c_2 K_O(f^k)K_I(f^k)\left(\log\frac{\log |f^k(x_2)|}{\log|f^k(x_1)|}\right)^{1-d},
\]
where the positive constants $c_1$ and $c_2$ depend only on $r$ and~$d$. After
rearranging and using the fact that $K_I(f^k)\leq K_I(f)^k$ and
$K_O(f^k)\leq K_O(f)^k$ this gives the desired inequality.
\end{proof}
\begin{proof}[Proof of Theorem \ref{theorem_dimJ}]
Towards a contradiction assume that there is a function $f$ satisfying \eqref{0d}
with $\dim\mathcal{J}(f)<1$. Then $\mathcal{J}(f)$ is totally disconnected and
$\mathcal{J}(f)=\partial\mathcal{A}(f)$  by \cite[Theorem 1.2]{BFN}.
Hence, the complement of $\mathcal{J}(f)$ comprises of one unbounded component $U$ 
which is contained in $\cA(f)$.
	
Choose $\alpha>(K_IK_O)^{1/(d-1)}$. Then $\alpha> 1$. Let also $x_1\in U$
and, using~\eqref{0d},
choose $R>|x_1|$  so large that
\begin{equation}\label{eq4}
M(r,f)>r
\quad\text{and}\quad
\log M(r,f)>(\log r)^\alpha \quad \text{for all}\ r\geq R.
\end{equation}
Since $x_1\in U\subset \cA(f)$ there exists $L\in\mathbb{N}$ such that 
\begin{equation}\label{eq5}
\left|f^{k+L}(x_1)\right|\geq M^{k+1}(R,f)
\end{equation}
for all $k\in\N$.
Since $|x_1|<R$ and hence $|f^k(x_1)|\leq M^{k}(R,f)$ this implies that 
\begin{equation}\label{eq6}
\left|f^{k+L}(x_1)\right|\geq M\!\left(M^{k}(R,f),f\right)\geq M\!\left(|f^k(x_1)|,f\right)
\end{equation}
for all $k\in\N$.
	
Choose an arc $\gamma$ in $U$ connecting $x_1$ and $f^{L}(x_1)$. 
Cover this arc by $m$ balls $B(x_i,r_i)$ so that $x_m=f^{L}(x_1)$ 
and such that $x_i\in\gamma$ and $B(x_i,4r_i)\cap \mathcal{J}(f)=\emptyset$ for
$i=1,\dots m$.
Applying now Lemma \ref{lemma5.1} $2m-2$ times we obtain 
\begin{equation}\label{eq1}
\log\frac{\log|f^{k+L}(x_1)|}{\log|f^k(x_1)|}\leq C\left(K_I(f)K_O(f)\right)^{k/(d-1)}
\end{equation} 
for all large enough $k$,
where $C$ is a constant that depends on $m, d$ and the radii $r_i$. 
	
Combining this with \eqref{eq6} yields that
\[
\log\frac{\log M(|f^k(x_1)|,f)}{\log|f^k(x_1)|}\leq C\left(K_I(f)K_O(f)\right)^{k/(d-1)}.
\]
Since $|f^k(x_1)|\to\infty$ as $k\to \infty$ we may apply the second inequality
in~\eqref{eq4} for 
$r=|f^k(x_1)|$ if $k$ is sufficiently large. 
It thus follows from the last equation that
\begin{equation}\label{eq7}
(\alpha-1)\log\log|f^k(x_1)|\leq C\left(K_I(f)K_O(f)\right)^{k/(d-1)}
\end{equation} 
for large~$k$.

It follows from~\eqref{eq4} that
\[
\log M^{k}(R,f)\geq \left(\log R\right)^{\alpha^{k}}
\]
for all $k\in\N$.
Together with~\eqref{eq5} we thus have 
\[
\log\log|f^k(x_1)|\geq \log \log M^{k+1-L}(R,f)\geq \alpha^{k+1-L}\log R
\]
for large~$k$.
Since
$\alpha>(K_IK_O)^{1/(d-1)}$ this contradicts~\eqref{eq7} for large~$k$.
\end{proof}

\end{document}